\journal{Automatica}
\newtheorem{theorem}{Theorem}
\newtheorem{definition}[theorem]{Definition}
\newtheorem{lemma}[theorem]{Lemma}
\newtheorem{assumption}[theorem]{Assumption}
\newtheorem{proposition}[theorem]{Proposition}
\newtheorem{remark}[theorem]{Remark}
\newtheorem{example}[theorem]{Example}
\newcommand{\norm}[1]{\left\Vert #1\right\Vert}
\newcommand{\abs}[1]{\left|#1\right|}
\newcommand{\tm}{\times}%
\newcommand{\trn}{^{\scriptscriptstyle \top}}%
	\newcommand \Iff   {\Leftrightarrow}
	\newcommand \qiq   {\quad\Iff\quad}
\newcommand\q{\enquote}
\def\field#1{\mathbb #1}%
\def\R{\field{R}}%
\newcommand{\ep}{\varepsilon}%
\newcommand{\N}{\mathbb{N}}%
\newcommand{\KC}{\mathcal{K}}%
\newcommand{\UC}{\mathcal{U}}%
\newcommand{\A}{\mathcal{A}}%
\DeclareMathOperator{\id}{id}
\def\K{\mathcal{K}}%
\def\LL{\mathcal{L}}%
\def\KL{\mathcal{KL}}%
\def\Kinf{\mathcal{K}_\infty}%
\let\ol=\overline%
\let\ul=\underline%
\newcommand{\red}[1]{{\color{red} #1}}
\newcommand \sgn   {\mathrm{sgn}}
\title{\huge A relaxed small-gain theorem for infinite networks}
\author[First]{Navid~Noroozi}
\address[First]{Institute of Informatics, LMU Munich, 80538 Munich, Germany}
\ead{navid.noroozi@lmu.de}
\author[Second]{Andrii~Mironchenko}
\address[Second]{Faculty of Computer Science and Mathematics, University of Passau, 94032 Passau, Germany}
\ead{andrii.mironchenko@uni-passau.de}
\author[Third]{Fabian~R.~Wirth}
\address[Third]{Faculty of Computer Science and Mathematics, University of Passau, 94032 Passau, Germany}
\ead{fabian.lastname@uni-passau.de}
\begin{document}

\begin{abstract}

\vspace{-0.3cm}
Motivated by the scalability problem in large networks, we study stability of a network of infinitely many finite-dimensional subsystems.
We develop a so-called \emph{relaxed} small-gain theorem for input-to-state stability (ISS) with respect to a closed set and show that every exponentially input-to-state stable system \emph{necessarily} satisfies the proposed small-gain condition.
Following our bottom-up approach, we study the well-posedness of the interconnection based on the behavior of the individual subsystems.
Finally, we over-approximate large-but-finite networks by infinite networks  and show that all the stability properties and the performance indices obtained for the infinite system can be transferred to the original finite one if each subsystem of the infinite network is individually ISS.
Interestingly, the size of the truncated network does \emph{not} need to be known.
The effectiveness of our small-gain theorem is verified by application to an urban traffic network. 

\end{abstract}

\begin{keyword}
Networked systems, input-to-state stability, small-gain theorem, Lyapunov methods
\end{keyword}

\maketitle

\section{Introduction}

Recent advances in large-scale computing, cheap distributed sensing and large-scale data management have created the potential for smart applications, in which large numbers of dispersed agents need to be regulated for a common objective.
In the domain of Smart Cities, for instance, city-wide traffic control based on cheap personal communication, car-to-car communication and the deployment of numerous sensors can provide a major step toward safe, energy-efficient and environmentally friendly traffic concepts.
The vision of safe and efficient control of such large, dispersed systems requires tools that are capable of handling \emph{uncertain} and \emph{time-varying} number of participating
agents, limited communication, the need for \emph{scale-free} methods, as well as stringent \emph{safety} specifications.

Standard tools in the literature of stability analysis and control do not scale well to such increasingly common smart networked systems.
In fact, networks designed using classic tools may lead to \emph{fragile} systems, where stability and performance indices of the system do depend on the size of the systems in a way that the network tends to instability as the size of the network grows; cf. e.g.~\cite{DMS19a,Sarkar.2018,BeJ17,Jovanovic.2005b}.
An efficient approach to address this fragility is to over-approximate a finite-but-large network with an \emph{infinite network} consisting of countably  many subsystems~\cite{Bamieh.2012,Barooah.2009,BPD02}.
By treating this over-approximating network, we envisage to develop \emph{scale-free} tools for analysis and control of the original (i.e. finite-but-large) networks.
In particular, it is expected that an infinite network {captures} the essence of its corresponding finite network, i.e. the performance/stability indices achieved for the infinite network will be transferable to any finite truncation of the network; cf., e.g., a vehicle platooning application in~\cite{Jovanovic.2005b}.
However, this expectation has to be rigorously checked as \emph{counter-intuitive} results for optimal control of vehicle platoons have been already observed in~\cite{Curtain.2010}.

Motivated by all the above discussions, this paper investigates input-to-state stability (ISS) with respect to closed sets of discrete-time infinite networks within a small-gain framework.
As an infinite network is an infinite dimensional system, the \emph{set} stability problem raises well-posedness issues of the system, which are addressed first.
Then we develop so-called \emph{relaxed} small-gain conditions for which, in contrast with classic small-gain conditions~\cite{Dashkovskiy.2010,DRW07,JMW96}, every subsystem does \emph{not} have to be individually ISS.
In fact, relaxed small-gain conditions allow to treat the case that
subsystems have a stabilizing effect on each other and subsystems can be even individually \emph{unstable}; see~\cite{Geiselhart.2015,Gielen.2015,Noroozi.2014} for several examples of finite networks.
In particular, in case of exponential ISS we show both necessity and sufficiency of the proposed small-gain conditions.
We then truncate the infinite network to introduce a corresponding finite network which can be arbitrarily large and possibly \emph{unknown} in size.
We show that if each subsystem in the infinite network is individually ISS, all the stability and/or performance indices obtained for the infinite network are \emph{preserved} for its finite counterpart.
In that way, the small-gain conditions for the finite network are
\emph{independent} on the size of the network and hence, they can be used for
arbitrarily large networks, with possibly unknown size.
We illustrate the effectiveness of our results by application to an urban traffic network.

{\it Related literature:}
ISS theory of infinite-dimensional systems has been extensively studied in the recent years.
For an overview of this subject see, e.g.,~\cite{MiP20}.
However, most of the development is related to ISS of continuous-time systems,
with some exceptions, see, e.g.,~\cite[Section 9.6]{MiP20}, for references on ISS of infinite-dimensional impulsive systems combining continuous-time and discrete-time dynamics.

Recently, ISS small-gain theory for infinite networks of continuous-time systems has been intensively studied in~\cite{MKG20,DaP20,KMS19,NMK21}.
For \emph{small-gain theorems in trajectory formulation} decisive results have been obtained in \cite{MKG20}, where it was shown that a well-posed infinite network of input-to-state stable infinite-dimensional systems is again ISS, provided the so-called \emph{monotone limit property} holds. This property is slightly stronger than the \emph{uniform small-gain condition} which is also introduced in \cite{MKG20}, but is equivalent to the uniform small-gain condition in case of networks with a linear gain operator as well as for finite networks.
Furthermore, the results in \cite{MKG20} fully generalize available results for finite networks, provide spectral-type criteria for ISS of networks governed by linear and homogeneous gain operators, are applicable for a very broad classes of infinite networks and assume very mild regularity properties for subsystems and the whole network.

\emph{Lyapunov-based small-gain theorems} for continuous-time systems are now restricted to the case of countable interconnections of finite-dimensional components.
In~\cite{DaP20} it was shown that a network of an infinite number of ISS systems is also ISS, if all the nonlinear gains characterizing the influence of subsystems on each other, are uniformly less than identity.
By means of examples, it is established in~\cite{DMS19a} that classic max-form small-gain conditions (SGCs) developed for finite-dimensional systems~\cite{Dashkovskiy.2010} do not guarantee the stability of infinite networks of ISS systems, even if all the systems are linear.
To address this issue, more restrictive robust strong SGCs are developed in~\cite{DMS19a}, where local ISS Lyapunov functions are defined in implication form and the gain operator is used in a max formulation, which makes the gain operator nonlinear, even if all the gains are linear.
Nonlinearity of the gain operator makes the analysis of the infinite interconnection highly challenging, and the Lyapunov-based small-gain criteria obtained in \cite{DMS19a,DaP20} are not tight and more investigations are needed.

In contrast to this, for the case of linear gain operators tight results have been achieved in \cite{KMS19}, where sum-type SGCs for exponential ISS of infinite networks are developed.
 In~\cite{KMS19} each subsystem is assumed to be individually exponentially ISS and a small-gain condition in the form of a spectral radius criterion is presented. 
This work is further extended to exponential ISS with respect to closed sets in~\cite{NMK21}.
Such a generalized formulation of stability with respect to a closed set covers a wide range of stability/stabilization problems including incremental stability,  robust consensus/synchronization, ISS of time-varying systems as well as variants of input-to-output stability in a \emph{unified} setting~\cite{NMK21}.
In all of the above works, ISS of an interconnection is analyzed using a Lyapunov small-gain approach.

In classic ISS small-gain theorems, including all of the above
contributions, it is required that each subsystem is
\emph{individually} ISS to conclude ISS for the whole network.
This requirement is obviously conservative and rules out a large class of systems.
To address this conservatism, one may develop SGCs letting subsystems have stabilizing effect on each other.
To formulate such a setting for \emph{finite-dimensional} networks, the notion of finite-step Lyapunov functions~\cite{Geiselhart.2014c,Aeyels.1998} can be merged with Lyapunov-based small-gain methodology, which leads to so-called \emph{relaxed SGCs}~\cite{Geiselhart.2015,Gielen.2015,Noroozi.2014}.
A finite-step Lyapunov function is an energy function which does not have to decay every single time step, but only after a finite number of steps.
When this property of finite-step Lyapunov functions comes to a network of systems, it lets us look into future time steps of solutions and allow for considering potential stabilizing effect of subsystems on each other.
Interestingly enough, relaxed SGCs are shown to be both \emph{necessary} and \emph{sufficient} and therefore they can be applicable to networks with unstable subsystems.
In~\cite{Noroozi.2018a} relaxed SGCs for ISS of finite-dimensional networks with respect to closed sets have been developed and applications to incremental stability, ISS of time-varying networks and distributed observers design are discussed.

In this work, we extend the results in~\cite{Noroozi.2018a} to  infinite-dimensional systems.
Such a generalization leads to several \emph{nontrivialities} as this calls for a careful choice of an infinite-dimensional state space of the overall system,
and developing direct and converse Lyapunov theorems in an infinite-dimensional setting.
In terms of existing works in the infinite networks context, our work is close to~\cite{DaP20} as we also assume the internal ISS gains to be less than identity. However, our setting is formulated in discrete-time domain and that of~\cite{DaP20} is given in a continuous-time domain.
More importantly, the SGCs in~\cite{DaP20} are only sufficient, while here we show the \emph{necessity} of our formulation in case of exponential ISS.

A preliminary version of this work has been presented at~\cite{NMW20}.
In comparison to our previous work, we provide all proofs.
Additionally, in this work
we rigorously investigate under which conditions the system is well-defined
and well-posed.
In particular, we define the system on an extended state space with the structure of a Fr\'echet space, where the system is automatically
well-defined, cf. Section~\ref{sec:System description} below for more details.
While in~\cite{NMW20} we mainly assume the network to be well-posed, here we relate local stability conditions imposed on subsystems with well-posedness of the system and provide an illustrative example (cf. Lemma~\ref{lem:Uniform-K-boundedness}, Theorem~\ref{thm:Well-posedness-coupled-systems} and Example~\ref{examp:Example-well-posedness} below).
Finally, we discuss the preservation of ISS indices under truncation of an
infinite network in more generality than in~\cite{NMW20},
cf. Section~\ref{sec:From-infinite-to-finite-networks} below.

\section{Preliminaries}

\subsection{Notation}

We write $\N := \{1,2,3,\ldots\}$ for the set of positive integers and $\N_0 := \N\cup\{0\}$.
For vector norms on (in)finite-dimensional vector spaces, we write $|\cdot|$.
We use comparison functions formalism (see~\cite{Kellett.2014}):
{\allowdisplaybreaks
\begin{equation*}
\begin{array}{ll}
{\K} &:= \left\{\gamma:\R_+\rightarrow\R_+\left|\ \gamma\mbox{ is continuous, strictly} \right. \right. \\
&\phantom{aaaaaaaaaaaaaaaaaaa}\left. \mbox{ increasing and } \gamma(0)=0 \right\}, \\
{\K_{\infty}}&:=\left\{\gamma\in\K\left|\ \gamma\mbox{ is unbounded}\right.\right\},\\
{\LL}&:=\left\{\gamma:\R_+\rightarrow\R_+\left|\ \gamma\mbox{ is continuous and strictly}\right.\right.\\
&\phantom{aaaaaaaaaaaaaaaa} \text{decreasing with } \lim\limits_{t\rightarrow\infty}\gamma(t)=0\},\\
{\KL} &:= \left\{\beta:\R_+\times\R_+\rightarrow\R_+\left|\ \beta \mbox{ is continuous,}\right.\right.\\
&\phantom{aaaaaa}\left.\beta(\cdot,t)\in{\K},\ \beta(r,\cdot)\in {\LL},\ \forall t\geq 0,\ \forall r >0\right\}. \\
\end{array}
\end{equation*}
}
For $\alpha,\gamma\in\mathcal{K}$ we write $\alpha<\gamma$ if $\alpha(s)<\gamma(s)$ for all $s>0$.
Composition of functions is denoted by the symbol $\circ$ and repeated
composition of, e.g., a function $\gamma$ is defined inductively by
$\gamma^1:=\gamma$, $\gamma^{i+1}:=\gamma\circ \gamma^{i}$.

\subsection{System description}%
\label{sec:System description}

We study the interconnection of countably many systems, each
  given by a finite-dimensional difference equation.  The set $\N$ is used
  as the index set (by default). For each system $i$ we denote by $I_i
  \subset N\setminus \{ i \}$ the index sets of the neighbors of system $i$, i.e. the set
  of subsystems that directly influence the dynamics of system $i$. The
  $i$th subsystem is written as%
\begin{equation}\label{eq_ith_subsystem}
  \Sigma_i: \quad {x}_i^+ = f_i(x_i,\ol{x}_i,u_i).%
\end{equation}
Here $x_i \in \R^{n_i}$ is the state of the $i$th subsystem, $x_i^+$ denotes the state of the system $\Sigma_i$ at the next time step,
 $\ol{x}_i \in X(I_i)$ is the state of all the neighbors of system $x_i$, where 
\begin{eqnarray}
X({I_i}):= \prod_{j \in I_i} \R^{n_j}.
\label{eq:X(I_i)-space}
\end{eqnarray}

%
 
We impose the following assumptions on the subsystems $\Sigma_i$:
\begin{assumption}
\label{ass:Properties-of-Sigma_i}
The family $(\Sigma_i)_{i\in\N}$ comes together with sequences $(n_i)_{i\in\N}$, $(p_i)_{i\in\N}$ of positive integers and index sets $I_i \subset \N \backslash \{i\}$, $i\in\N$, so that the following holds
\begin{enumerate}[(i)]
\item\label{itm:gen-assumption-1}  The state vector $x_i$ of $\Sigma_i$ is an element of $\R^{n_i}$.%
\item\label{itm:gen-assumption-2}
The vector $\ol{x}_i =(x_j)_{j\in I_i}$ is composed of the state vectors 
$x_j$, $j \in I_i$.  
The space $X({I_i})$ is equipped with the norm $|\ol x_i|=|\ol x_i|_{ X({I_i})}:=\sup_{j\in I_i}|x_j|$.
For each $i\in \N$, the sets $I_i$ and $\{j\in \N: i\in I_j\}$ are
finite. 
\item\label{itm:gen-assumption-3} The external input vector $u_i$ is an element of $\R^{p_i}$.
\item\label{itm:gen-assumption-4} The right-hand side is a continuous function $f_i:\R^{n_i} \tm  X({I_i}) \tm \R^{p_i} \rightarrow \R^{n_i}$.
\end{enumerate}
\end{assumption}
 
In other words,
Assumption~\ref{ass:Properties-of-Sigma_i}\,\eqref{itm:gen-assumption-2}
requires that  each subsystem only has finitely many internal
inputs and provides an input only to finitely many other subsystems.
In system~\eqref{eq_ith_subsystem}, we consider $\ol{x}_i$ as an \emph{internal input} and $u_i$ as an \emph{external input}.

To define the overall network composed of subsystems $\Sigma_i$, we consider the state vector $x = (x_i)_{i\in\N}$, the input vector $u = (u_i)_{i\in\N}$ and the right-hand side 
\begin{eqnarray}
f(x,u) := (f_1(x_1,\ol{x}_1,u_1),f_2(x_2,\ol{x}_2,u_2),\ldots).
\label{eq:f-interconnection}
\end{eqnarray}
The overall system is then formally written as%
\begin{equation}\label{eq_interconnection}
  \Sigma:\quad x^+ = f(x,u).%
\end{equation}

Fix a norm on each $\R^{n_i}$ and define the state space $X$ for the system $\Sigma$ as
\begin{equation*}
 X:= \ell^{\infty}(\N,(n_i)) := \Bigl\{ x = (x_i)_{i\in\N} : x_i \in \R^{n_i},\ \sup_{i\in\N}|x_i| < \infty \Bigr\},%
\end{equation*}
and equip this space with the norm $|x|_{\infty} := \sup_{i\in\N}|x_i|$.
The space $\ell^{\infty}(\N,(n_i))$ is called the \emph{$l_\infty$-sum of spaces $\R^{n_i}$}, $i\in \N$, and it is a Banach space, see \cite[p.~127]{Hel06}. 

Note that by (ii) for all $x =(x_j)_{j\in \N}$, all $i\in\N$ and all $\ol{x}_i = (x_j)_{j\in I_i} \in  X({I_i})$ it holds that
\begin{equation}
|\ol{x}_i|\leq |x|_\infty.
\label{eq:norms-internal-input-state}
\end{equation}
Similarly, we consider the \emph{external input value space} $U := \ell^\infty(\N,(p_i))$, 
where we fix norms on $\R^{p_i}$ that we simply denote by $|\cdot|$ again. The norm on $U$ we denote by $|\cdot|_\infty$.
By the space $\UC$ of admissible \emph{external input functions}, we mean all sequences $u: \N_0 \rightarrow U$ 
such that 
$  \|u\|_{\infty} := \sup_{k\geq0}|u(k)|_\infty <\infty$.
The space $\UC$ with the norm $\|\cdot\|_{\infty}$ becomes a Banach space.

We also define  the \emph{extended state space} 
\begin{eqnarray}
X_E:= \prod_{i \in\N} \R^{n_i} .
\label{eq:Extended-state-space}
\end{eqnarray}
As $I_i$ is finite for any $i\in\N$, the state of any subsystem $\Sigma_i$ at any finite time $k\in\N$ is affected by only a finite number of other agents, and thus $f$ is always well-defined as a map from $X_E\tm U$ to $X_E $ and hence
for any $k\in\N$, any $u\in\UC$ and any $\xi \in X_E$ the corresponding solution to~\eqref{eq_interconnection}
 $x(k,\xi,u)$ is well-defined.

Thus, the equations \eqref{eq_interconnection} and space of input values $U$ define a forward complete control system in the extended space $X_E$, which we denote by $\Sigma=\Sigma(f,X_E,U)$ if we want to make the
data defining the system explicit.

The space $X_E$ has the structure of a Fr\'echet space, if endowed with
the countable family of seminorms $p_i: X_E \to \R_+$,
$p_i\left((x_j)_{j\in \N}\right) := |x_i|$, $i\in\N$. This yields a complete, locally
convex vector space with the  translation-invariant metric
\begin{equation*}
    d_E \big((x_i)_{i\in \N},(y_i)_{i\in \N} \big) = \sum_{i=1}^\infty
    \frac{1}{2^i} \frac{|x_i-y_i|}{1+|x_i-y_i|}.  
\end{equation*}
The induced topology is the topology of pointwise convergence, i.e.
for sequences $( x^k )_{k\in\N}= ( (x_i^k)_{i\in\N} )_{k\in\N}$, we have $x^k
\to x = (x_i)_{i\in\N}$ if and only if for all $i \in \N$ we have
\begin{equation*}
    \lim_{k \to \infty} x_i^k = x_i.
\end{equation*} Thus, if we
define the concept of ISS on $X_E$ in the usual manner, then we would
simply replace the norms by the distance to $0$ in $X_E$.

At the same time, for stability analysis we would like to ensure that the solutions of $\Sigma$ are well-defined in the space $X$.
This motivates
\begin{definition}
\label{def:well-posedness} 
We say that \emph{$\Sigma$ is well-posed}, if $f$ is well-defined as a map from $X \tm U$ to $X$. 
\end{definition}

Well-posedness ensures that for each initial condition $\xi \in X$ and for each input $u\in\cal{U}$ the solution of the system~\eqref{eq_interconnection} exists on $\N_0$ and lives in $X$, that is \eqref{eq_interconnection} is forward complete.
The following gives a complete characterization of well-posed systems on
$\ell^\infty(\N,(n_i))$.

\begin{lemma}\label{lem:well-posedness}
    Consider the systems $\Sigma_i$ as in \eqref{eq_ith_subsystem} and
    assume Assumption~\ref{ass:Properties-of-Sigma_i} holds.
		The following assertions are equivalent:
    \begin{enumerate}[(i)]
      \item\label{itm:well-posedness-1} The induced system $\Sigma$ is well-posed.
      \item\label{itm:well-posedness-2} $\Sigma$ is well-posed and there exist $C> 0$ and $\kappa\in\Kinf$ such that for all $x \in X$ and $u\in U$
		\begin{eqnarray}
		|f(x,u)|_\infty \leq C + \kappa(|x|_\infty)+ \kappa(|u|_\infty).
		\label{eq:uniform boundedness of f}
		\end{eqnarray}
      \item\label{itm:well-posedness-3} There exist $C> 0$ and $\kappa\in\Kinf$ such that
        for all $i \in \N$ and all $x_i\in \R^{n_i}, \ol{x}_i \in
         X({I_i}), u_i \in \R^{p_i}$ 
        \begin{equation}
        \label{eq:bound-lem:well-posedness}
            \vert f_i(x_i,\ol{x}_i, u_i)\vert \leq C +
            \kappa(\vert x_i\vert) +
            \kappa(\vert \ol{x}_i\vert) + \kappa(\vert u_i \vert).
        \end{equation}
    \end{enumerate}
\end{lemma}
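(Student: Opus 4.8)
The plan is to prove the cycle of implications (i)$\Rightarrow$(iii)$\Rightarrow$(ii)$\Rightarrow$(i); the only substantial work lies in (i)$\Rightarrow$(iii). The implication (ii)$\Rightarrow$(i) is immediate, since statement (ii) contains (i) verbatim. For (iii)$\Rightarrow$(ii), I would simply insert the uniform pointwise estimates $|x_i|\le|x|_\infty$, $|\ol x_i|\le|x|_\infty$ (the latter from \eqref{eq:norms-internal-input-state}) and $|u_i|\le|u|_\infty$ into \eqref{eq:bound-lem:well-posedness}, obtaining $|f_i(x_i,\ol x_i,u_i)|\le C+2\kappa(|x|_\infty)+\kappa(|u|_\infty)$ uniformly in $i\in\N$. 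Taking the supremum over $i$ shows $f(x,u)\in X$, so $\Sigma$ is well-posed, and it yields \eqref{eq:uniform boundedness of f} with $\kappa$ replaced by $2\kappa\in\Kinf$.

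For (i)$\Rightarrow$(iii), the idea is to manufacture a single comparison function from the data. Define the nondecreasing quantity
\[
\omega(r):=\sup_{i\in\N}\ \sup\{\,|f_i(x_i,\ol x_i,u_i)|: |x_i|\le r,\ |\ol x_i|\le r,\ |u_i|\le r\,\}.
\]
The crux is to show $\omega(r)<\infty$ for every $r\ge0$; once this is known, monotonicity of $\omega$ gives $\omega(\max\{a,b,c\})\le\omega(a)+\omega(b)+\omega(c)$, so I can dominate $r\mapsto\omega(r)-\omega(0)$ by some $\kappa\in\Kinf$ and set $C:=\omega(0)=|f(0,0)|_\infty$, which is finite because $0\in X$, $0\in U$ and $\Sigma$ is well-posed. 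This delivers \eqref{eq:bound-lem:well-posedness}.

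The heart of the argument is thus the finiteness of $\omega$, which I would prove by contradiction. If $\omega(r_0)=\infty$ for some $r_0>0$, there are indices $i_k$ and local configurations $(x_{i_k}^k,\ol x_{i_k}^k,u_{i_k}^k)$ of norm at most $r_0$ with $|f_{i_k}(\cdot)|\to\infty$. Since each $f_i$ is continuous and each ball in the finite-dimensional space $\R^{n_i}\tm X(I_i)\tm\R^{p_i}$ is compact, no single index can recur infinitely often, so after thinning I may assume the $i_k$ are pairwise distinct. Writing $N_k:=\{i_k\}\cup I_{i_k}$ for the extended neighborhoods, Assumption~\ref{ass:Properties-of-Sigma_i}\,\eqref{itm:gen-assumption-2}, i.e.\ the finiteness of both $I_i$ and $\{j:i\in I_j\}$, guarantees that each $N_k$ meets only finitely many other $N_{k'}$; a greedy selection then extracts an infinite subsequence along which the sets $N_k$ are pairwise disjoint.

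Disjointness is what makes the construction consistent: I define $x\in X$ by placing the bad state $x_{i_k}^k$ at coordinate $i_k$ and the bad neighbor values comprising $\ol x_{i_k}^k$ at the coordinates in $I_{i_k}$, setting all remaining coordinates to zero, and similarly define $u\in U$ with $u_{i_k}:=u_{i_k}^k$ and zeros elsewhere. Because all inserted values have norm at most $r_0$, one has $x\in X$, $u\in U$ with $|x|_\infty,|u|_\infty\le r_0$; yet by construction $f_{i_k}(x_{i_k},\ol x_{i_k},u_{i_k})=f_{i_k}(x_{i_k}^k,\ol x_{i_k}^k,u_{i_k}^k)$, whose norm is unbounded in $k$, so $f(x,u)\notin X$, contradicting well-posedness. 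I expect the main obstacle to be precisely this patching step: the internal input $\ol x_i$ is not a free variable but is cut out of the global state, so one must exploit the two finiteness conditions in Assumption~\ref{ass:Properties-of-Sigma_i}\,\eqref{itm:gen-assumption-2} to decouple the chosen local configurations before they can be assembled into a single admissible pair $(x,u)$.
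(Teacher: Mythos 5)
Your proposal follows essentially the same route as the paper: the same three implications, the same insertion of \eqref{eq:norms-internal-input-state} for (iii)$\Rightarrow$(ii), and, for (i)$\Rightarrow$(iii), the identical contradiction machinery --- unbounded local configurations at pairwise distinct indices, thinning via the two finiteness conditions in Assumption~\ref{ass:Properties-of-Sigma_i}\,\eqref{itm:gen-assumption-2} until the extended neighborhoods $\{i_k\}\cup I_{i_k}$ are pairwise disjoint, and patching the bad data into a single admissible pair $(x,u)\in X\times U$ with $f(x,u)\notin X$. Your observation that no single index can recur infinitely often (continuity of $f_i$ on compact balls) is exactly the paper's argument for why a recurring $i^*$ would contradict the unboundedness of $\vert f_{i_k}\vert$, and your accounting of why each $N_k$ meets only finitely many $N_{k'}$ is sound.

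There is, however, one step that is not correct as stated: the claim that, once $\omega(r)<\infty$ for all $r$, the nondecreasing function $r\mapsto\omega(r)-\omega(0)$ ``can be dominated by some $\kappa\in\Kinf$.'' Since $\omega$ is a supremum over infinitely many indices, the family $(f_i)_{i\in\N}$ need not be equicontinuous at the origin, so one may have $a:=\lim_{r\to 0^+}\omega(r)-\omega(0)>0$; in that case no $\Kinf$ function, which must vanish continuously at $0$, can dominate $\omega(\cdot)-\omega(0)$ near $0$. The paper treats precisely this point: it subtracts the right-hand limit at $0$, absorbs it into the constant (producing the $C+3a$ in \eqref{eq:i-bound-2}), and only then dominates the resulting function --- nondecreasing, zero at zero and continuous at zero, but possibly discontinuous elsewhere --- by a $\Kinf$ function, invoking \cite[Proposition 9]{MiW19b}; note that even away from $0$ this domination is not automatic for a merely nondecreasing function and is exactly what that proposition supplies. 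Your argument closes with the same one-line repair: set $a$ as above, dominate $\omega(r)-\omega(0)-a$ (for $r>0$, extended by $0$ at $r=0$) by some $\hat\kappa\in\Kinf$, and take the constant in \eqref{eq:bound-lem:well-posedness} to be of the form $3(\omega(0)+a)$. The rest of your bookkeeping ($\omega(\max\{a,b,c\})\le\omega(a)+\omega(b)+\omega(c)$ by monotonicity and nonnegativity, and $\omega(0)=\vert f(0,0)\vert_\infty<\infty$ by well-posedness) is fine.
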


\begin{proof}
\eqref{itm:well-posedness-3} $\Rightarrow$ \eqref{itm:well-posedness-2}: Let $(x,u) \in X \times U$. Using \eqref{eq:norms-internal-input-state}, we have for each
$i$ that 
\begin{multline*}
    \vert f_i(x_i,\ol{x}_i, u_i)\vert \leq C +
            \kappa(\vert x_i\vert) +
            \kappa(\vert \ol{x}_i\vert) + \kappa(\vert u_i \vert)
\\ \leq C +
            \kappa(\vert x\vert_\infty) +
            \kappa(\vert x\vert_\infty) + \kappa(\vert u \vert_\infty).
\end{multline*}
As the right hand side is independent of $i$ this shows that
$(f_i(x_i,\ol{x}_i, u_i))_{i\in \N} \in X$ and so $f(x,u) \in X$.

\eqref{itm:well-posedness-2} $\Rightarrow$ \eqref{itm:well-posedness-1}: Clear.

\eqref{itm:well-posedness-1} $\Rightarrow$ \eqref{itm:well-posedness-3}: By assumption, each $f_i$ is continuous, so that
with $C_i := \vert f_i(0,0, 0)\vert$
and $\kappa_i \in \K$ defined by
\begin{equation*}
    \kappa_i(r) := \max \left \{\vert f_i(x_i,\ol{x}_i, u_i) -
      f_i(0,0, 0) \vert  \ ; \  \vert x_i\vert, \vert
      \ol{x}_i\vert,\vert u_i \vert \leq r \right \},
\end{equation*}
we have for all $i\in \N$ that always
\begin{equation}
\label{eq:i-bound}
    \vert f_i(x_i,\ol{x}_i, u_i)\vert \leq C_i +
            \kappa_i (\vert x_i\vert) + \kappa_i(\vert \ol{x}_i\vert) + \kappa_i(\vert u_i \vert).
\end{equation}
If $\sup_{i \in \N} C_i$ is not finite, then $f$ is not well posed, as
then $f(0,0) \notin X$.  
Otherwise, we set $C:= \sup_{i \in \N} C_i$ and 
define $\kappa: \R_+ \to \R_+ \cup \{ \infty \}$ by
\begin{equation*}
    \kappa(r) := \sup_{i\in \N} \kappa_i(r),\quad r\geq 0. 
\end{equation*}
As the supremum of
continuous increasing functions, $\kappa$ is lower semicontinuous and
nondecreasing on its domain of definition.

If $\kappa(r)$ is finite for every $r\geq 0$, define 
\[
\tilde{\kappa}(r):=
\begin{cases}
0, & r=0,\\
\kappa(r)-a, & r>0,
\end{cases}
\]
where $a:=\lim_{r\to+0}\kappa(r)\geq 0$ (the limit exists as $\kappa$ is nondecreasing). 
By construction, $\tilde{\kappa}$ is nondecreasing, continuous at $0$ and $\tilde{\kappa}(0)=0$.
It is easy to see that $\tilde{\kappa}$ can be upper bounded by a certain $\hat{\kappa}\in {\cal K}_\infty$ (this follows from a more general result in \cite[Proposition 9]{MiW19b}). 

Then $\kappa(r) \leq \hat{\kappa}(r) + a$ for all $r\geq 0$ and we obtain from \eqref{eq:i-bound} that 
\begin{equation}
\label{eq:i-bound-2}
    \vert f_i(x_i,\ol{x}_i, u_i)\vert \leq C + 3a +
            \hat{\kappa} (\vert x_i\vert) + \hat{\kappa}(\vert \ol{x}_i\vert) + \hat{\kappa}(\vert u_i \vert),
\end{equation}
which shows the claim.

It
thus remains to show that if $\kappa(R) = \infty$ for some $R >0$, then
$\Sigma$ is not well-posed. If $\kappa(R) = \infty$, there exist sequences 
\begin{equation*}
    ( i_k )_{k\in\N}, ( x_{i_k} )_{k\in\N}, (\ol{x}_{i_k})_{k\in\N}, ( u_{i_k})_{k\in\N}   
\end{equation*}
such that for all $k$: $i_k \in \N$, $\vert x_{i_k}\vert,  \vert
\ol{x}_{i_k}\vert,  \vert u_{i_k} \vert \leq R$ and such that
\begin{equation}
\label{eq:unboundedsec}
    \vert f_{i_k}(x_{i_k},\ol{x}_{i_k}, u_{i_k})\vert \geq k.
\end{equation}
By assumption, the neighborhood graph of the network is locally finite. In
particular, for any $i \in \N$, the set 
\begin{equation*}
    \{ j \in \N \ ; \ I_j \cap I_i \neq \emptyset \} 
\end{equation*}
is finite. We may thus assume, by taking a subsequence if necessary, that
for all indices $k,\ell\in \N$ we have 
\begin{equation}
\label{eq:emptyintersec}
  \left(\{i_k \} \cup I_{i_k} \right)
\cap \left( \{ i_\ell \} \cup I_{i_\ell} \right) = \emptyset  .
\end{equation}
Otherwise,
by local finiteness, a certain index $i^* \in \N$ would appear infinitely often in the
sequence $\{ i_k \}$, but this contradicts the bound in \eqref{eq:i-bound}
applied to $i^*$ and the bound \eqref{eq:unboundedsec}. We may
now define points $(x,u)= ((x_i),(u_i)) \in X \times U$ by setting
\begin{align*}
   x_i &:= \left \{ 
   \begin{matrix}
       x_{i_k} & \quad & \text{if } i=i_k \text{ for some } k\in \N, \\
       x_j    &       &  \text{if } i=j \in I_{i_k}  \text{ for some }
       k\in \N,  \\ 
       0 & & \text {otherwise;} 
   \end{matrix}\right. \\ 
   u_i &:= \left \{ 
   \begin{matrix}
       u_{i_k} & \quad & \text{if } i=i_k \text{ for some } k\in \N, \\
        0 & & \text {otherwise} 
   \end{matrix}\right..
\end{align*}
This is well-defined by \eqref{eq:emptyintersec} and $(x,u) \in X\times U$
by the choice of the sequences. Finally, $f(x,u) \notin X$ by \eqref{eq:unboundedsec}. This
completes the proof.
\end{proof}

\begin{remark}
\label{rem:BRS} 
We say that a well-posed system $\Sigma$ has bounded reachability sets (BRS) if for all bounded balls $B_1$ in $X$ and $B_2$ in $\UC$ and for any time $k$ there is a bounded ball $B_3 \subset X$ such that $x(s,x_0,u)\in B_3$ for all $x_0 \in B_1$, $u\in B_2$, $s=1,\ldots,k$. 

Using a discrete-time variation of the characterization of BRS in \cite[Lemma 3]{MiW18b}, it is easy to see that condition~\eqref{itm:well-posedness-2} is equivalent to BRS for $\Sigma$, and thus the equivalence 
\eqref{itm:well-posedness-1} $\Iff$ \eqref{itm:well-posedness-2} in Lemma~\ref{lem:well-posedness} reads as well-posedness is equivalent to BRS.
\end{remark}

\subsection{Distances in sequence spaces}%

We continue to assume that $X=\ell^\infty(\N,(n_i))$. Consider nonempty closed sets $\A_i \subset \R^{n_i}$, $i\in\N$. For each $x_i \in \R^{n_i}$ we define the distance of $x_i$ to the set $\A_i$ by%
\begin{equation*}
  |x_i|_{\A_i} := \inf_{y_i\in \A_i}|x_i - y_i|.
\end{equation*}

Let $\A(I_i) := \prod_{j\in I_i} \A_j$, and note that for $\ol{x}_i =
(x_j)_{j\in I_i} \in X({I_i})$ it holds that
\begin{align}
\label{eq:Distance-in-overline-A_i}
  |\ol{x}_i|_{ \A(I_i)} 
	:=& \inf_{z\in  \A(I_i)}|\ol{x}_i - z| 
	= \inf_{(z_j)_{j\in I_i} \in \A(I_i)  }\max_{j\in I_i}|x_j - z_j| \nonumber\\
	=& \max_{j\in I_i}|x_j|_{\A_j},	
\end{align}
where, in the last equality, we have used that the choice of $z_j\in
\A_j$, $j\in I_i$ is independent of the other indices.

Further define the set%
\begin{align}\label{eq:Overall_set}
\hspace{-3mm}  \A := \{x \in X: x_i \in \A_i,\ i\in\N\} {=}  X \cap \prod_{i\in\N}\A_i.%
\end{align}

\begin{definition}
\label{def:uniformly bounded-A_i} 
We say that $(\A_i)_{i\in\N}$ is \emph{uniformly bounded}, if there is a
$C>0$ such that  for all $i\in\N$ and all $x\in \A_i$ it holds that $|x|\leq C$. 
\end{definition}
Note that if $(\A_i)_{i\in\N}$ is uniformly bounded, then $\A = \prod_{i\in\N}\A_i $.
If $\A \neq \emptyset$, we define the distance from $x\in X$ to $\A$ as%
\begin{eqnarray}\label{eq:Distance_to_overall_set}
  |x|_{\A} := \inf_{y\in \A}|x-y|_\infty = \inf_{y\in \A} \sup_{i\in\N} |x_i-y_i| .%
\end{eqnarray}

As $X$ is infinite dimensional, it is in general not true for any
closed set $K$ that for $x\in X$ there is a minimizing point $y \in K$
with $|x-y|_\infty=|x|_{K}$. The following is thus a remarkable property
of the set ${\cal A}$. The proof uses the well-known property that in
$\R^n$ the distance to a closed set has a minimizer. We include a proof
for the convenience of the reader.

\begin{lemma}\label{lem:Alternative-A-representation} 
Let $X=\ell^\infty(\N,(n_i))$.
Assume that $\A$ defined by \eqref{eq:Overall_set} is nonempty. Then for any $x \in X$ there is some $y^* \in \A$ such that
\begin{align}\label{eq:Distance_to_overall_set_Formula}
  |x|_{\A} = \sup_{i\in\N} |x_i|_{\A_i} = |x - y^*|_\infty.
\end{align}
\end{lemma}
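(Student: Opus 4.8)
The plan is to prove the two claimed equalities by sandwiching $|x|_{\A}$ between $\sup_{i\in\N}|x_i|_{\A_i}$ from below and $|x-y^*|_\infty$ from above for a suitably constructed $y^*\in\A$, and then observing that this forces all three quantities to coincide. The one inequality is cheap and the other is where all the content sits, so I would organize the argument accordingly.

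First I would establish the lower bound $|x|_{\A}\ge\sup_{i\in\N}|x_i|_{\A_i}$. Take any $y=(y_i)_{i\in\N}\in\A$; since $y_i\in\A_i$ for every $i$, the componentwise estimate $|x_i-y_i|\ge|x_i|_{\A_i}$ holds, hence $\sup_{i\in\N}|x_i-y_i|\ge\sup_{i\in\N}|x_i|_{\A_i}$. Taking the infimum over $y\in\A$ in the definition \eqref{eq:Distance_to_overall_set} gives the claim. Next, for the constructive part, I would exploit that each $\A_i$ is a nonempty \emph{closed} subset of the \emph{finite-dimensional} space $\R^{n_i}$: intersecting $\A_i$ with a closed ball centered at $x_i$ produces a compact set on which the continuous map $z\mapsto|x_i-z|$ attains its minimum, so there is $y_i^*\in\A_i$ with $|x_i-y_i^*|=|x_i|_{\A_i}$. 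Setting $y^*:=(y_i^*)_{i\in\N}$ then gives $|x_i-y_i^*|=|x_i|_{\A_i}$ for every $i$ by construction.

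The hard part, and the precise place where the infinite-dimensional setting threatens to break down, is verifying that this $y^*$ is an admissible competitor, i.e. that $y^*\in X$ (hence $y^*\in\A$) so that it may legitimately be used in the infimum defining $|x|_{\A}$; in a general closed set the pointwise minimizers need not assemble into a bounded sequence. To control this I would first show $\sup_{i\in\N}|x_i|_{\A_i}<\infty$: since $\A\ne\emptyset$ by hypothesis, fix some $\bar y\in\A$, so that $|x_i|_{\A_i}\le|x_i-\bar y_i|\le|x-\bar y|_\infty\le|x|_\infty+|\bar y|_\infty<\infty$ uniformly in $i$ because $x,\bar y\in X$. Then $|y_i^*|\le|x_i|+|x_i-y_i^*|=|x_i|+|x_i|_{\A_i}\le|x|_\infty+\sup_{j\in\N}|x_j|_{\A_j}$, a bound independent of $i$, so indeed $\sup_{i\in\N}|y_i^*|<\infty$ and $y^*\in\A$. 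Finally, $|x|_{\A}\le|x-y^*|_\infty=\sup_{i\in\N}|x_i-y_i^*|=\sup_{i\in\N}|x_i|_{\A_i}$; combining this with the lower bound from the first step closes the chain $|x|_{\A}=\sup_{i\in\N}|x_i|_{\A_i}=|x-y^*|_\infty$, which is exactly \eqref{eq:Distance_to_overall_set_Formula}.
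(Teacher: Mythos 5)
Your proof is correct and follows essentially the same route as the paper's: the same lower bound via an arbitrary competitor $y\in\A$, the same compactness argument in each finite-dimensional $\R^{n_i}$ (the paper intersects $\A_i$ with the ball of radius $|x_i|_{\A_i}+1$ around $x_i$, which is the explicit form of your ball construction), and the same uniform bound on the pointwise minimizers $y_i^*$, derived from a fixed element of $\A$, to guarantee $y^*\in X$ and hence $y^*\in\A$. You correctly identified that the admissibility of $y^*$ is the only genuinely infinite-dimensional obstacle, which is exactly the point the paper's proof addresses.
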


\begin{proof}
Fix $x\in X$ and $y \in\A$. Then
\begin{align}
\abs{x-y} = \sup_{i\in\N} \abs{x_i- y_i} \geq  \sup_{i\in\N} \abs{x_i}_{\A_i} ,
\end{align}
as each $y_i \in\A_i$.
Taking the infimum over $y \in \mathcal{A}$, we get
\begin{align} \label{eq:distance-proof-1}
|x|_{\A} = \inf_{y \in \A} |x - y|_\infty \geq \sup_{i \in \N} |x_i|_{\mathcal{A}_i} .
\end{align}

Conversely, pick any $x\in X$ and for each $i\in\N$ consider the set
\begin{eqnarray}
B_i:= \A_i \cap \{z \in\R^{n_i}: |z-x_i|\leq \abs{x_i}_{\A_i} + 1 \},
\label{eq:Auxiliary-set}
\end{eqnarray}
which is a non-empty, closed, bounded, and hence compact subset of $\A_i$, on which $y_i \mapsto |x_i - y_i|$ is a continuous function.


Hence, there is at least one minimizer $y^*_i\in\A_i$ of $\inf_{y_i \in \A_i} |x_i - y_i|$, which also belongs to $B_i$.

As $\A\neq\emptyset$, there is $R>0$, such that for all $i\in\N$ there is $z_i\in\A_i$ with $\abs{z_i}\leq R$.
Hence, for any $i\in\N$ and any $x\in X$ we have that 
\[
\abs{x_i}_{\A_i} = \inf_{z\in \A_i}|x_i-z|\leq |x_i-z_i|\leq |x_i| + |z_i| \leq |x|_\infty+R,
\]
and by \eqref{eq:Auxiliary-set}, $|y^*_i|\leq|x|_\infty+R+1$.

Thus, for a given $x \in X$, there exists some $y^*=(y^*_i)_{i\in\N} \in \mathcal{A}$ such that, for all $i \in \N$,
$$
|x_i - y^*_i| = |x_i|_{\mathcal{A}_i}.
$$
But then,
$$\sup_{i \in \N} |x_i|_{\A_i} = \sup_{i \in \N} |x_i - y^*_i| = |x - y^*|_\infty \geq |x|_\A,$$
as required.
\end{proof}

We stress that the statement is
only applicable for $\A \subset X =\ell^\infty(\N,(n_i))$, while in
general  $\prod_{i\in\N}\A_i$ may contain only unbounded sequences.
Also note that if $\A=\{0\}$, then $|x|_{\{0\}} = |x|_\infty$.%


\section{Input-to-State Stability}\label{sec:ISS}

We aim to establish the stability of the interconnected system with respect to a closed set $\A \subset X$ via so-called finite-step Lyapunov functions.
For this purpose, we introduce the notions of input-to-state stability (ISS)  and of $\K$-boundedness with respect to $\A$.
Throughout this section we assume:
\begin{assumption}\label{ass:Monotonicity-of-norm}
$\Sigma$ is well-posed.
\end{assumption}
 
\begin{definition}\label{def:K-boundedness}
Let a nonempty closed set $\A \subset X $ be given.
A function $f:X \times U \to X$  is called \emph{$\mathcal{K}$-bounded with respect to $\A$}, if there are $\kappa_1,\kappa_2\in \KC$ such that
for all $\xi\in X$ and $\mu\in U$
\begin{align}\label{eq:K-bound-2}
\abs {f(\xi, \mu)}_\A \le \kappa_1 (\abs {\xi}_\A )+\kappa_2 (|\mu|_{\infty}).
\end{align}
\end{definition}

\begin{remark}
In view of item \eqref{itm:well-posedness-2} of Lemma~\ref{lem:well-posedness}, it is not hard to see that in case of $\A=\{0\}$, 
$\K$-boundedness of $f$ is equivalent (under assumptions on $\Sigma$ which we have imposed) to well-posedness of $f$, its continuity at $0$ and $f(0,0)=0$.
\end{remark}

\begin{definition}\label{def:eISS}
Given a nonempty closed set $\A \subset X$, system $\Sigma$ is said to be \emph{input-to-state stable (ISS) with respect to $\A$} if there exist $\beta\in \KL$ and $\gamma \in \KC$ such that for any initial state $\xi \in X$ and any input $u\in\UC$ the corresponding solution to~\eqref{eq_interconnection} satisfies%
\begin{equation}\label{eq:ISS}
  \hspace{-1mm}|x(k,\xi,u)|_\A \leq \max\big\{ \beta( |\xi|_\A,k) , \gamma(\|u\|_{\infty}) \big\} \,\,\forall k \in \N_0 .%
\end{equation}
$\Sigma$ is called \emph{exponentially input-to-state stable (eISS) with respect to $\A$}, if there are constants $C\geq 1, \rho \in [0,1)$ and $\gamma \in \KC$ such that $\beta$ in~\eqref{eq:ISS} can be chosen to be $C\rho^k |\cdot|_{\mathcal A}$.
\end{definition}

As observed in~\cite{GeW16,Noroozi.2018a}, for finite-dimensional systems every system which is ISS with respect to $\A$ is \emph{necessarily} $\K$-bounded with respect to $\A$.
Here we show that this observation also holds for infinite network~\eqref{eq_interconnection}, see Proposition~\ref{prop:ISS-criterion} below and the corresponding set $\A$ is invariant for the dynamics of $\Sigma$ if $u\equiv 0$.

The underlying idea is to formulate stability properties from a subsequence of state trajectories.
In particular, we aim to understand, whether one can conclude ISS
 by only looking at solutions every $M$ time steps, with $M\in\N$.

We define the iterates of $f$ inductively, by $f^1:=f$, and
$f^{k+1}:X_E\times U^{k+1}\to X_E$ by  
$f^{k+1}(x,(u_0,\ldots,u_{k}))  = f\big(f^{k}\big(x,(u_0,\ldots,u_{k-1})\big),u_k\big)$. The
$M$-iterate system is then $\Sigma^M := \Sigma(f^M,X_E,U^M)$.

\begin{remark}
\label{rem:Well-posedness-of-iterates} 
If $\Sigma$ is well-posed, that is $f$ is well-defined as a map from $X\tm U$ to $X$, then also $\Sigma^M$ is well-posed for any $M>1$.
However, it may be that $\Sigma^M$ is well-posed for a certain $M>1$, but $\Sigma$ is not well-posed.
E.g. in Section~\ref{sec:eISS-infinite-networks}
 we verify well-posedness of $\Sigma^M$ without having well-posedness of $\Sigma$ in advance.
\end{remark}

The following lemma establishes that $\Sigma^M$ is ISS if and only if the trajectories of $\Sigma$ satisfy an ISS-like estimate every $M$ time steps.
\begin{lemma}
\label{lem:M-iterate-eISS}
Given a nonempty closed set $\A \subset X$ and $M \in \N$, $\Sigma^M$ is ISS with respect to $\A$, if and only if there are $\beta\in \KL$ and $\gamma \in \KC$ such that for any initial state $\xi \in X$ and any $u\in\UC$ the corresponding solution to~\eqref{eq_interconnection} satisfies for all $k \in \N_0$
\begin{equation}\label{eq:ISS-M}
 |x(Mk,\xi,u)|_\A \leq \max\{ \beta( |\xi|_\A,k) , \gamma(\|u\|_{\infty}) \}.
\end{equation}
\end{lemma}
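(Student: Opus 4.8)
The plan is to exploit a bijective time-blocking correspondence between input functions of $\Sigma$ and those of $\Sigma^M$, under which the $M$-subsampled trajectory of $\Sigma$ becomes the full trajectory of $\Sigma^M$, so that the two ISS estimates are literally the same inequality written for matched data. Since the statement is an equivalence and the correspondence is a bijection, both implications follow from a single construction, and one may even use the \emph{same} pair $\beta\in\KL$, $\gamma\in\KC$ on both sides.

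First I would unwind the definition of the iterate. From the inductive definition of $f^k$ one has, for any $\xi$ and any finite string $(u_0,\dots,u_{M-1})$, that $f^M(\xi,(u_0,\dots,u_{M-1})) = x(M,\xi,u)$ whenever $u\in\UC$ satisfies $u(j)=u_j$ for $0\le j\le M-1$. Iterating, the solution of $\Sigma^M$ at step $k$ driven by an input function $v:\N_0\to U^M$, $v(k)=(v_0(k),\dots,v_{M-1}(k))$, should coincide with the solution of $\Sigma$ at time $Mk$ driven by the ``unblocked'' input $u$ defined by $u(Mk+j):=v_j(k)$, $0\le j\le M-1$. I would make this precise by a short induction on $k$: the base case $k=0$ is trivial, and the step applies $f^M$ exactly as above to pass from time $Mk$ to time $M(k+1)$ in $\Sigma$ while advancing $\Sigma^M$ by one step. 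Conversely, every $u\in\UC$ yields a blocked $v$ via $v(k):=(u(Mk),\dots,u(Mk+M-1))$, and these two assignments are mutually inverse.

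Next I would record the norm identity. Equipping $U^M$ with the max product norm, so that $|(w_0,\dots,w_{M-1})|=\max_{0\le j\le M-1}|w_j|_\infty$, one obtains $\|v\|_\infty=\sup_k\max_{0\le j\le M-1}|v_j(k)|_\infty=\sup_{\ell\ge 0}|u(\ell)|_\infty=\|u\|_\infty$, since $Mk+j$ sweeps out $\N_0$ bijectively. Thus input magnitudes are preserved exactly under blocking and unblocking. Because well-posedness of $\Sigma$ entails well-posedness of $\Sigma^M$ (Remark~\ref{rem:Well-posedness-of-iterates}), all trajectories in play remain in $X$ and every distance $|\cdot|_\A$ is well-defined.

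With these two facts the equivalence is immediate. For the forward implication, ISS of $\Sigma^M$ supplies $\beta,\gamma$ with $|x_{\Sigma^M}(k,\xi,v)|_\A\le\max\{\beta(|\xi|_\A,k),\gamma(\|v\|_\infty)\}$; substituting the blocked $v$ associated to a given $u$ and using $x_{\Sigma^M}(k,\xi,v)=x(Mk,\xi,u)$ together with $\|v\|_\infty=\|u\|_\infty$ yields exactly \eqref{eq:ISS-M}. For the converse, given an arbitrary input function $v$ for $\Sigma^M$, I would unblock it to $u$, apply \eqref{eq:ISS-M}, and rewrite through the same two identities to recover the ISS estimate for $\Sigma^M$. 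I do not expect a genuine obstacle; the only points demanding care are fixing the product norm on $U^M$ so that magnitudes match \emph{exactly} (any equivalent norm would merely compose $\gamma$ with a $\Kinf$ function, whereas the max norm gives equality), and keeping the distinction between input \emph{functions} $v:\N_0\to U^M$ and input \emph{values} in $U^M$ clean throughout the induction.
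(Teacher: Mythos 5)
Your proof is correct: the blocking/unblocking bijection between admissible inputs of $\Sigma$ and of $\Sigma^M$, the induction establishing $x_{\Sigma^M}(k,\xi,v)=x(Mk,\xi,u)$, and the exact norm identity $\|v\|_\infty=\|u\|_\infty$ under the max product norm on $U^M$ make the two ISS estimates literally the same inequality, while your appeal to Remark~\ref{rem:Well-posedness-of-iterates} (together with the standing well-posedness assumption of Section~\ref{sec:ISS}) keeps all trajectories in $X$ so that $|\cdot|_\A$ is meaningful. The paper states Lemma~\ref{lem:M-iterate-eISS} without proof, and your argument supplies precisely the routine identification of $M$-subsampled trajectories of $\Sigma$ with trajectories of $\Sigma^M$ that the authors implicitly rely on there and reuse in Propositions~\ref{prop:ISS-criterion} and~\ref{prop_iss}.
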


The following proposition shows a useful criterion for ISS of discrete-time systems. 
\begin{proposition}
\label{prop:ISS-criterion} 
Let $\A \subset X$ be closed.
\begin{enumerate}[(i)]
	\item $\Sigma$ is ISS with respect to $\A$ $\qiq$ there is $M\in\N$ such that $\Sigma^M$ is ISS with respect to $\A$, and $f$ is $\mathcal{K}$-bounded with respect to $\A$.
	\item $\Sigma$ is eISS with respect to $\A$ $\qiq$ there is $M\in\N$ such that $\Sigma^M$ is eISS with respect to $\A$, and $f$ is $\mathcal{K}$-bounded with respect to $\A$ with a linear $\kappa_1$.
\end{enumerate}
\end{proposition}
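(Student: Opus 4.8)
The plan is to treat both parts in parallel, using Lemma~\ref{lem:M-iterate-eISS} as the bridge between the hypothesis ``$\Sigma^M$ is ISS (resp.\ eISS)'' and the every-$M$-steps estimate \eqref{eq:ISS-M} (with $\beta$ of the exponential form $C\rho^{(\cdot)}|\cdot|_\A$ in the eISS case), and then accounting for the finitely many intermediate time instants by means of the $\K$-boundedness of $f$. Thus in each part the direction ``$\Rightarrow$'' extracts the two listed ingredients from (e)ISS of $\Sigma$, and the direction ``$\Leftarrow$'' reassembles a full (e)ISS estimate for $\Sigma$ from those two ingredients.

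For ``$\Rightarrow$'': if $\Sigma$ is ISS then \eqref{eq:ISS} holds, and since $\beta(r,\cdot)$ is nonincreasing we have $\beta(|\xi|_\A,Mk)\le\beta(|\xi|_\A,k)$, so \eqref{eq:ISS-M} holds and, by Lemma~\ref{lem:M-iterate-eISS}, $\Sigma^M$ is ISS for every $M$ (in particular $M=1$). $\K$-boundedness of $f$ follows from a constant-input argument: given $\xi\in X$ and $\mu\in U$, apply \eqref{eq:ISS} at $k=1$ to the admissible constant input $u\equiv\mu$, for which $x(1,\xi,u)=f(\xi,\mu)$ and $\|u\|_\infty=|\mu|_\infty$, which yields $|f(\xi,\mu)|_\A\le\beta(|\xi|_\A,1)+\gamma(|\mu|_\infty)$, i.e.\ \eqref{eq:K-bound-2} with $\kappa_1:=\beta(\cdot,1)\in\K$ and $\kappa_2:=\gamma$. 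In the eISS case $\beta(r,1)=C\rho\,r$, so $\kappa_1$ is linear and $\Sigma^M$ is eISS (its rate being $\rho^M\in[0,1)$), which gives the sharper conclusion of part (ii).

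For ``$\Leftarrow$'': starting from \eqref{eq:ISS-M}, write an arbitrary time as $t=Mk+r$ with $0\le r<M$. Iterating the one-step bound $|x(s+1,\xi,u)|_\A=|f(x(s,\xi,u),u(s))|_\A\le\kappa_1(|x(s,\xi,u)|_\A)+\kappa_2(\|u\|_\infty)$ over the at most $M-1$ steps from $Mk$ to $t$ and taking the maximum over $r\in\{0,\dots,M-1\}$ produces $\K$-functions $\alpha_1,\alpha_2$ with $|x(t,\xi,u)|_\A\le\alpha_1(|x(Mk,\xi,u)|_\A)+\alpha_2(\|u\|_\infty)$. Combining this with \eqref{eq:ISS-M}, using $\alpha_1(\max\{a,b\})=\max\{\alpha_1(a),\alpha_1(b)\}$ and $a+b\le\max\{2a,2b\}$, the input-dependent terms collect into a single $\K$-function of $\|u\|_\infty$, while the state-dependent term becomes a multiple of $\alpha_1(\beta(|\xi|_\A,\lfloor t/M\rfloor))$; bounding $\lfloor t/M\rfloor\ge\max\{0,t/M-1\}$ and dominating the resulting nonincreasing step function in $t$ by a genuine $\KL$ function $\tilde\beta$ yields \eqref{eq:ISS}. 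In part (ii) the same interpolation is carried out with linear $\kappa_1(r)=Lr$, so the iteration stays linear and gives $|x(t,\xi,u)|_\A\le L_M|x(Mk,\xi,u)|_\A+C_M\kappa_2(\|u\|_\infty)$ with constants $L_M,C_M$ depending only on $L,M$; feeding in the exponential estimate and using $\hat\rho^{\lfloor t/M\rfloor}\le\hat\rho^{-1}(\hat\rho^{1/M})^t$ transfers the decay rate to $\rho:=\hat\rho^{1/M}\in[0,1)$ and, after collecting input terms, produces a bound $\max\{\tilde C\rho^t|\xi|_\A,\gamma_0(\|u\|_\infty)\}$ with $\tilde C\ge1$, i.e.\ eISS of $\Sigma$.

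The only genuinely delicate point is the comparison-function bookkeeping in ``$\Leftarrow$'' of part (i): interpolating across intermediate steps forces additive combinations of $\K$-terms that must be folded back into the $\max$-form, and the floor dependence $\beta(\cdot,\lfloor t/M\rfloor)$ must be upper-bounded by an honest $\KL$ function (e.g.\ by passing to $\beta(\cdot,\max\{0,t/M-1\})$ and adding a strictly decaying term to restore strict monotonicity in time). By contrast, in part (ii) linearity of $\kappa_1$ keeps every manipulation exponential, so there the only things to verify are that $\hat\rho^{1/M}\in[0,1)$ and that the prefactor can be enlarged to be $\ge1$.
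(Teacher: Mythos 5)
Your proposal is correct and follows essentially the same route as the paper's proof: the forward direction reads off $\K$-boundedness from the ISS estimate at $k=1$ (with $\kappa_1=\beta(\cdot,1)$, linear in the eISS case) and notes iterate ISS is immediate, while the backward direction writes $t=Mk+j$, iterates the one-step $\K$-bound over at most $M-1$ steps using $\kappa(a+b)\le\kappa(2a)+\kappa(2b)$, and folds the result together with the $M$-step estimate into max-form. If anything, you are more explicit than the paper on two points it glosses over—dominating $\beta(\cdot,\lfloor t/M\rfloor)$ by a genuine $\KL$ function of $t$, and the exponential bookkeeping $\hat\rho^{\lfloor t/M\rfloor}\le\hat\rho^{-1}(\hat\rho^{1/M})^t$ in part (ii), which the paper dismisses with ``the same steps and linearity of $\kappa_1$.''
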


\begin{proof}
(ii) is analogous to (i), thus we show only (i).

$\Rightarrow$.
Clearly, ISS implies ISS of the $M$-iterate of $\Sigma$ for any $M\in\N$.

Furthermore, from~\eqref{eq:ISS} and the causality of $\Sigma$ we have
$$
\abs{f(\xi,u)}_\A = \abs{x(1,\xi,u)}_\A \leq \beta\big(\abs{\xi}_\A,1\big)+ \gamma(\abs{u}_\infty) ,
$$
which implies global $\K$-boundedness with $\kappa_1 (\cdot) =  \beta\big(\cdot,1\big)$ and $\kappa_2 (\cdot) = \gamma(\cdot)$.

$\Leftarrow$. Let $M\in\N$ be such that $\Sigma^M$ is ISS with respect to $\A$.
Pick any $\xi\in X$, $u\in\UC$ and any $t \in\N$. For this $t$ there are unique $k\in\N$ and $j\in\{0,1,\ldots,M-1\}$ such that
$t=kM+j$.

By the cocycle property of solutions we have that 
\begin{align*}
  |x(t,&\xi,u)|_\A = |x(Mk+j,\xi,u)|_\A \\
									=& |x(1, x(Mk+j-1,\xi,u),u(\cdot+Mk+j-1))|_\A.
\end{align*}
As $\Sigma$ is $\K$-bounded, we obtain that 
\begin{eqnarray*}
  |x(t,\xi,u)|_\A \leq \kappa_1(|x(Mk+j-1,\xi,u)|_\A) + \kappa_2(\|u\|_\infty).
\end{eqnarray*}
Using $\K$-boundedness again, and exploiting the trivial inequality $\kappa(a+b)\leq \kappa(2a)+\kappa(2b)$, which is valid for any $a,b\geq 0$, we obtain that
\begin{align*}
  |x(t,&\xi,u)|_\A \\
	\leq& \kappa_1\big(\kappa_1(|x(Mk{+}j{-}2,\xi,u)|_\A) + \kappa_2(\|u\|_\infty)\big)\\
	& + \kappa_2(\|u\|_\infty)\\
  \leq& \kappa_1\big(2\kappa_1(|x(Mk+j-2,\xi,u)|_\A)\big) + \kappa_1\big(2\kappa_2(\|u\|_\infty)\big)\\
  &+ \kappa_2(\|u\|_\infty).
\end{align*}
By induction, there exist $\rho,\eta \in\Kinf$, which are independent of $j\in\{0,\ldots,M-1\}$, such that 
\begin{eqnarray*}
  |x(t,\xi,u)|_\A \leq \rho(|x(Mk,\xi,u)|_\A) + \chi(\|u\|_\infty).
\end{eqnarray*}
As $\Sigma^M$ is ISS, we proceed to 
\begin{align*}
  |x(t,\xi&,u)|_\A \leq \rho(\beta(|\xi|_\A,k) + \gamma(\|u\|_\infty)) + \chi(\|u\|_\infty)\\
								 \leq &\rho(2\beta(|\xi|_\A,k)) + \rho(2\gamma(\|u\|_\infty)) + \chi(\|u\|_\infty)\\
								 \leq &\max\!\big\{2\rho(2\beta(|\xi|_\A,k)) , \rho(2\gamma(\|u\|_\infty)) \!+\! \chi(\|u\|_\infty)\big\},
\end{align*}
which shows $\Sigma$ is ISS.

With the same steps as above and considering the linearity of $\kappa_1$ one can show that 
if $\Sigma^M$ is eISS then $\Sigma$ is eISS.
\end{proof}

\begin{remark}
Proposition~\ref{prop:ISS-criterion} has been shown for finite-dimensional systems in \cite[Remark 4.2, Corollary 4.3]{GeW16} on the basis of converse ISS Lyapunov theorems for finite-dimensional discrete-time systems. 
As such converse ISS Lyapunov results are not available for infinite-dimensional systems, we gave a direct non-Lyapunov proof for this fact.
\end{remark}

Now we pursue our strategy by introducing \emph{finite-step} ISS Lyapunov functions.

\begin{definition}\label{def:eISS-Lyapunov-Function}
Let a nonempty closed set $\A \subset X $ be given. 
A continuous function $V:X \rightarrow \R_+$ is called a \emph{finite-step ISS Lyapunov function} for $\Sigma$ with respect to~$\A$ if there exist $M \in\N$, $\ul\omega,\ol\omega, \alpha\in \Kinf$ with $\alpha < \id$ and $\gamma \in \K$ such that%
\begin{subequations}\label{eq_iss_lyap_props}
\begin{align}
\ul\omega (|\xi|_\A) &\leq V(\xi) \leq \ol\omega (|\xi|_\A) , \label{eq_iss_lyap_props-1}\\
 V (x(M,\xi,u))&\leq \max\big\{ \alpha( V(\xi)) , \gamma(\|u\|_{\infty})\big\} , \label{eq_iss_lyap_props-2}%
\end{align}
\end{subequations}
hold for all $\xi \in X$ and $u \in \UC$.

The function $V$ is called a finite-step \emph{eISS} Lyapunov function for $\Sigma$ with respect to~$\A$ if there are $M \in\N$, constants $\ul{w},\ol{w},b> 0,$ $\kappa\in[0,1)$ and $\gamma \in \K$ such that%
\begin{subequations}\label{eq_eiss_lyap_props}
\begin{align}
\ul{w} |\xi|^b_\A &\leq V(\xi) \leq \ol{w} |\xi|^b_\A , \label{eq_eiss_lyap_props-1}\\
 V (x(M,\xi,u))&\leq \max\big\{\kappa V(\xi) , \gamma(\|u\|_{\infty})\big\} , \label{eq_eiss_lyap_props-2}%
\end{align}
\end{subequations}
hold for all $\xi \in X$ and $u \in \UC$.
If inequality~\eqref{eq_iss_lyap_props-2} (resp.~\eqref{eq_eiss_lyap_props-2}) holds with $M=1$, then we drop the term ``finite-step'' and simply speak of an ISS (resp. eISS) Lyapunov function.
\end{definition}

Lyapunov functions, for which there exists $\ul\omega\in\KC_\infty$ so that $V(x) \geq \ul\omega(|\xi|_\A)$ for all $\xi \in X$, are called \emph{coercive} Lyapunov functions~\cite{MiW18b}.

Note that every ISS Lyapunov function is necessarily a finite-step Lyapunov function for any $M$.
However, a finite-step Lyapunov function does \emph{not} have to decay every single time step, but only every $M$ time steps.
Such a relaxation is useful in analysis and design of control systems; see e.g.~\cite{Geiselhart.2014c,Gielen.2015,Geiselhart.2015,Noroozi.2018a,Noroozi.2020a} for finite-step Lyapunov function-based analysis and synthesis of  \emph{finite}-dimensional control systems.
In particular, as far as large-scale networks are concerned, the use of finite-step Lyapunov functions enables us to introduce small-gain conditions which are not only sufficient, but also necessary for the verification of ISS of the network (see Theorem~\ref{nec} below).

Now we show that the existence of a finite-step ISS Lyapunov function guarantees ISS of the system.

\begin{proposition}\label{prop_iss}
Consider a system $\Sigma(f,X,U)$ with $\K$-bounded $f:X\times U \to X$.
If there exists a finite-step ISS Lyapunov function for $\Sigma$ with respect to $\A$, then $\Sigma$ is ISS with respect to $\A$.
Additionally, if $\kappa_1$ in~\eqref{eq:K-bound-2} is a linear function,
the existence of a finite-step eISS Lyapunov function for $\Sigma$
with respect to $\A$ implies eISS of $\Sigma$ with respect to $\A$.
\end{proposition}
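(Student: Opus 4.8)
The plan is to reduce the statement to the $M$-iterate system $\Sigma^M$ and then invoke the machinery already established. Concretely, I would first show that the finite-step ISS Lyapunov function $V$ forces $\Sigma^M$ to satisfy the estimate \eqref{eq:ISS-M}, so that $\Sigma^M$ is ISS with respect to $\A$ by Lemma~\ref{lem:M-iterate-eISS}. Then I close the argument with Proposition~\ref{prop:ISS-criterion}(i): since $f$ is assumed $\K$-bounded with respect to $\A$ (which in particular makes $\Sigma$ well-posed), ISS of $\Sigma^M$ together with $\K$-boundedness yields ISS of $\Sigma$. The eISS part follows the same skeleton and closes with part (ii) of the proposition, for which the linearity of $\kappa_1$ in \eqref{eq:K-bound-2} is exactly the extra hypothesis assumed.

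For the core step, fix $\xi\in X$, $u\in\UC$ and write $\phi_k := x(Mk,\xi,u)$. By the cocycle property of solutions, $\phi_{k+1} = x(M,\phi_k,u(\cdot+Mk))$, so applying the decrease estimate \eqref{eq_iss_lyap_props-2} at $\phi_k$ with the shifted input, together with $\|u(\cdot+Mk)\|_\infty \leq \|u\|_\infty$, gives
\[
V(\phi_{k+1}) \leq \max\{\alpha(V(\phi_k)), \gamma(\|u(\cdot+Mk)\|_\infty)\} \leq \max\{\alpha(V(\phi_k)), \gamma(\|u\|_\infty)\}.
\]
Writing $c := \gamma(\|u\|_\infty)$ and using $\alpha<\id$, a routine induction yields $V(\phi_k) \leq \max\{\alpha^k(V(\phi_0)), c\}$ (the increasing monotonicity of $\alpha$ propagates the majorant, while $\alpha(c)<c$ keeps the trajectory trapped once it falls below $c$). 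Invoking the standard comparison lemma, there is $\beta_0\in\KL$ with $\alpha^k(r)\leq \beta_0(r,k)$ for all $r\geq 0$ and $k\in\N_0$; sandwiching via \eqref{eq_iss_lyap_props-1} then gives
\[
|x(Mk,\xi,u)|_\A \leq \ul\omega^{-1}\!\big(\max\{\beta_0(\ol\omega(|\xi|_\A),k),c\}\big) \leq \max\{\beta(|\xi|_\A,k), \tilde\gamma(\|u\|_\infty)\},
\]
with $\beta(r,k) := \ul\omega^{-1}(\beta_0(\ol\omega(r),k))\in\KL$ and $\tilde\gamma := \ul\omega^{-1}\circ\gamma\in\K$. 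This is precisely \eqref{eq:ISS-M}, so $\Sigma^M$ is ISS with respect to $\A$, and Proposition~\ref{prop:ISS-criterion}(i) concludes.

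For the eISS case the same derivation runs with the linear decrease \eqref{eq_eiss_lyap_props-2}, which yields the cleaner recursion $V(\phi_{k+1}) \leq \max\{\kappa V(\phi_k), c\}$ and hence $V(\phi_k) \leq \max\{\kappa^k V(\phi_0), c\}$ with no comparison lemma needed, since $\kappa^k r$ is already a geometric $\KL$ majorant. Feeding this through the power-type bounds \eqref{eq_eiss_lyap_props-1} gives $|x(Mk,\xi,u)|_\A \leq \max\{C'\rho^k|\xi|_\A, \tilde\gamma(\|u\|_\infty)\}$ with $C' := (\ol w/\ul w)^{1/b}\geq 1$ and $\rho := \kappa^{1/b}\in[0,1)$, which is exactly eISS of $\Sigma^M$; Proposition~\ref{prop:ISS-criterion}(ii) then transfers it to $\Sigma$ using the linearity of $\kappa_1$.

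The only genuinely delicate point is the comparison lemma in the non-exponential case, i.e. the passage from the iterated bound $\alpha^k(r)$ to an honest $\KL$ function $\beta_0(r,k)$. Everything else is pointwise in the scalar quantity $V(\xi)$ and therefore insensitive to the infinite dimensionality of $X$; in particular, no converse Lyapunov theorem is invoked, which is what makes this route viable in the present setting. I would cite a standard source for the comparison lemma rather than reprove it.
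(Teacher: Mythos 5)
Your proposal is correct and takes essentially the same route as the paper: view the finite-step Lyapunov function as a 1-step ISS (resp.\ eISS) Lyapunov function for the $M$-iterate system $\Sigma^M$, conclude ISS (resp.\ eISS) of $\Sigma^M$, and transfer the property back to $\Sigma$ via Proposition~\ref{prop:ISS-criterion}, using the $\K$-boundedness (with linear $\kappa_1$ for the exponential case) exactly as the paper does. The only difference is that where the paper invokes the classic direct ISS Lyapunov theorem by citation (footnoting the standard arguments of \cite{Jiang.2001} and \cite{Noroozi.2018a}), you inline that argument --- the max-form induction $V(\phi_k)\leq\max\{\alpha^k(V(\phi_0)),c\}$, the $\KL$ majorant for $\alpha^k$, and the coercivity sandwich --- which is precisely the content of the cited result.
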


\begin{proof}
The proof boils down to the equivalence between ISS of $\Sigma$ and ISS of $\Sigma^M$.
A finite-step ISS Lyapunov function (with a given $M$) is a 1-step ISS Lyapunov function for $\Sigma^M$, which by classic direct ISS Lyapunov Theorem\footnote{The proof of the direct ISS Lyapunov theorem for infinite-dimensional systems follows similar arguments as those given in~\cite[Lemma 3.5]{Jiang.2001} and~\cite[Theorem 7]{Noroozi.2018a} for finite-dimensional systems. Therefore, it is not presented here.} implies ISS of
$\Sigma^M$. Proposition~\ref{prop:ISS-criterion} finishes the proof.
\end{proof}

Here we present a \emph{converse} finite-step eISS Lyapunov theorem which gives us an explicit formula to compute the integer $M$ in~\eqref{eq_eiss_lyap_props-2}.

\begin{proposition}\label{thm:converse-lyap}
Let $\A \subset X $ be nonempty and closed.
Suppose that  system $\Sigma$ is eISS with respect to $\A$ with $\rho \in [0,1)$, $C\geq 1$ and $\gamma\in\K$. Then for any function $V:X \rightarrow \R_+$ and constants $\ul{w},\ol{w},b > 0$ satisfying
\begin{align}
\ul w |\xi|^b_\A &\leq V(\xi) \leq \ol w |\xi|^b_\A ,\, \forall \xi \in X  \label{eq_eiss_lyap_props-3}
\end{align}
and for all $\kappa \in (0,1)$ there exists an $M \in \N$ such that 
\begin{align}
\hspace{-3mm} V (x(M,\xi,u))&\leq \max\big\{ \kappa V(\xi) , \ol w \gamma(\|u\|_{\infty})^b \big\} \,\,, \label{eq_eiss_lyap_props-4}%
\end{align}
for all $\xi \in X$ and all $u\in\UC$.
In particular, one can choose any $M$ satisfying $M \geq \frac{1}{b} \log_\rho (\frac{\kappa \ul w }{C^b\ol w })$,
  and $V$ is an exponential $M$-step ISS Lyapunov function for $\Sigma$.
	\end{proposition}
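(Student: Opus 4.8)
The plan is to exploit the eISS estimate directly. Since $\Sigma$ is eISS with respect to $\A$, we have for all $\xi\in X$, $u\in\UC$ and all $k\in\N_0$ the bound
\begin{equation*}
|x(k,\xi,u)|_\A \leq \max\big\{ C\rho^k |\xi|_\A , \gamma(\|u\|_\infty) \big\}.
\end{equation*}
The idea is to raise this to the power $b$ and then sandwich $V$ using the assumed bounds~\eqref{eq_eiss_lyap_props-3}. First I would note that since $t\mapsto t^b$ is increasing and the maximum of nonnegative quantities behaves well under monotone maps, $(\max\{a,c\})^b = \max\{a^b,c^b\}$; applying this to the eISS estimate at time $k=M$ gives
\begin{equation*}
|x(M,\xi,u)|_\A^b \leq \max\big\{ C^b\rho^{Mb} |\xi|_\A^b , \gamma(\|u\|_\infty)^b \big\}.
\end{equation*}

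Next I would insert the Lyapunov bounds. By the upper bound in~\eqref{eq_eiss_lyap_props-3}, $V(x(M,\xi,u)) \leq \ol w\, |x(M,\xi,u)|_\A^b$, and by the lower bound $|\xi|_\A^b \leq V(\xi)/\ul w$. Substituting these into the raised estimate yields
\begin{equation*}
V(x(M,\xi,u)) \leq \max\Big\{ \ol w\, C^b\rho^{Mb}\,\tfrac{V(\xi)}{\ul w} , \ol w\,\gamma(\|u\|_\infty)^b \Big\}
= \max\Big\{ \tfrac{\ol w\,C^b\rho^{Mb}}{\ul w}\,V(\xi) , \ol w\,\gamma(\|u\|_\infty)^b \Big\}.
\end{equation*}
Comparing with the desired inequality~\eqref{eq_eiss_lyap_props-4}, it then suffices to choose $M$ so that the coefficient of $V(\xi)$ does not exceed the prescribed $\kappa$, i.e.\ $\frac{\ol w\,C^b\rho^{Mb}}{\ul w} \leq \kappa$.

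Finally I would solve this inequality for $M$. Rearranging gives $\rho^{Mb} \leq \frac{\kappa\,\ul w}{C^b\,\ol w}$; taking $\log_\rho$ (which reverses the inequality because $\rho\in[0,1)$, so $\log_\rho$ is decreasing) produces exactly the stated threshold $M \geq \frac{1}{b}\log_\rho\!\big(\frac{\kappa\,\ul w}{C^b\,\ol w}\big)$, and any integer $M$ above this value works. The resulting inequality~\eqref{eq_eiss_lyap_props-4} is precisely the decay condition~\eqref{eq_eiss_lyap_props-2} of a finite-step eISS Lyapunov function with the given $\kappa$ and with $\gamma$ replaced by $\ol w\,\gamma(\cdot)^b$, so $V$ is an exponential $M$-step ISS Lyapunov function, completing the proof. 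I do not anticipate a genuine obstacle here: the argument is essentially algebraic manipulation of the eISS estimate, and the only points requiring mild care are verifying that the hypothesis $\kappa\ul w / (C^b\ol w) \in (0,1)$ so that the logarithm is well-defined and positive (this holds because $C\geq 1$, $\kappa<1$, and in the relevant regime $\ul w \leq \ol w$), and confirming the direction of the inequality when applying the decreasing function $\log_\rho$.
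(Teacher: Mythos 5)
Your proof is correct and takes essentially the same route as the paper's: apply the max-form eISS estimate at time $M$, raise it to the power $b$, sandwich with the bounds \eqref{eq_eiss_lyap_props-3}, and pick $M$ large enough that $\ol w\, C^b \rho^{Mb} \leq \kappa\, \ul w$, which is exactly the stated threshold. The only cosmetic difference is that you substitute the lower bound $\ul w\,|\xi|_\A^b \leq V(\xi)$ before fixing $M$, whereas the paper fixes $M$ first and then chains the inequalities; the algebra is identical.
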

	\begin{proof}
Consider any function $V:X \to \R_+$ satisfying~\eqref{eq_eiss_lyap_props-3} and let $\Sigma$ be eISS with a certain $\rho\in (0,1)$.
For every $M \in \N$ such that $M \geq \frac{1}{b} \log_\rho (\frac{\kappa \ul w }{C^b\ol w })$ it follows that $\ol w C^b \rho^{Mb}\le \kappa \ul w$.
The use of the eISS property of $\Sigma$,~\eqref{eq_eiss_lyap_props-3} and the above choice of $M$ give
\begin{align}
V(x(M,\xi,u)) & \leq \ol{w} |x(M,\xi,u)|^b_\A \nonumber\\
&\leq \ol{w} \max \big\{ C^b \rho^{Mb}|\xi|^b_\A , \gamma(\|u\|_{\infty})^b \big\} \nonumber\\ 
&\leq \max \big\{ \ul{w}\kappa |\xi|^b_\A , \ol{w} \gamma(\|u\|_{\infty})^b \big\} \nonumber\\
& \leq \max \big\{\kappa V(\xi) , \ol{w}\gamma(\|u\|_{\infty})^b \big\} ,  \label{eq_eiss_lyap_props-5}
\end{align}
for all $\xi \in X$. Thus~\eqref{eq_eiss_lyap_props-4} holds  with any $M \geq \frac{1}{b} \log_\rho (\frac{\kappa \ul w }{C^b\ol w })$, which
completes the proof.
\end{proof}
The above converse Lyapunov result provides an explicit formulation for a class of finite-step Lyapunov functions, which can be used for control purposes.
Given the desired exponential decay rate of solutions for a closed-loop system and a function $V$ satisfying~\eqref{eq_eiss_lyap_props-3}, one can immediately compute the corresponding positive integer $M$ from Proposition~\ref{thm:converse-lyap}.
This proposition is an infinite-dimensional extension of Theorem 10 in~\cite{Noroozi.2020a} which has been already used for controller design, see~\cite[Proposition 14, Theorem 23]{Noroozi.2020a} for more details.

\section{ISS for Infinite Networks}
\label{sec:eISS-infinite-networks}

The main objective of this work is to develop conditions for input-to-state stability of the interconnection of countably many subsystems \eqref{eq_ith_subsystem}, depending on certain stability-like properties of the subsystems.%

As argued in Section~\ref{sec:System description},
the solution of the overall system is well-defined in the extended state space, i.e. $x(k,\xi,u) \in X_E$ for all $k\in\N$, $\xi \in X_E$ and $u\in\UC$. At the same time, for now we do not require that $\Sigma$ is well-posed in the sense of Definition~\ref{def:well-posedness}.

We assume that each subsystem~\eqref{eq_ith_subsystem} satisfies a finite-step ISS Lyapunov-like condition, which is formulated by the following assumption.

\begin{assumption}\label{ass_vi_existence}
Given $M \in \N$ and nonempty closed sets $\A_i \subset \R^{n_i}$, $i\in\N$, for each subsystem $\Sigma_i$, $i\in\N$ there exists a continuous function $W_i:\R^{n_i} \rightarrow \R_+$ with the following properties.
\begin{itemize}
\item There are $\ul{\omega}_i,\ol{\omega}_i \in \Kinf $ so that for all $\xi_i \in \R^{n_i}$%
\begin{equation}\label{eq_viest}
  \ul{\omega}_i (|\xi_i|_{\A_i}) \leq W_i(\xi_i) \leq \ol{\omega}_i (|\xi_i|_{\A_i}) .%
\end{equation}
\item There are $\gamma_{ij} \in  \Kinf \cup \{0\}$ and $\gamma_{iu} \in \K$ so that for all $\xi \in X$, all $u \in\UC$  the following holds 
\begin{align}\label{eq_nablaviest}	
\!\!\!\!\!\! W_i (x_i(M,\xi,u)) \!\leq \!\max \!\big\{\! \sup_{j \in \N} \gamma_{ij} (W_j(\xi_j)) , \gamma_{iu} (\|u\|_{\infty}) \big\}, 
\end{align}
where $x_i(M,\xi,u)$ denotes the $i$th component of $x(M,\xi,u) \in X_E$.
\end{itemize}
\end{assumption}

In the following, we denote by $I_i(M)$ the index set of all
  subsystems, which influence $\Sigma_i$ on the interval $[0,M]$. In other
words, the set of all indices $j\in \N$ such that the knowledge of the
initial condition of $\Sigma_j$ is required to determine the component
$x_i(M)$ of the solution of $\Sigma$.

Note that $x_i (M)$ on the left hand side of~\eqref{eq_nablaviest} is the $i$th component of the solution $x$ to the network.
However, one does not need to know the entire dynamics to
compute~\eqref{eq_nablaviest} for the $i$th subsystem. For the sake of discussion, let $u = 0$.  If $M =
1$, then the computation of $x(1)$ only
requires the knowledge of $f_i$, $\xi_i$ and $\ol \xi_i$.  Now if $M =2$, then we require not
only the information from the neighbors, but also their neighbors,
i.e. $\ol \xi_i$ and $\ol \xi_j$ with $j\in I_i$ and $f_j$ with $j\in I_i$.  Similar arguments hold for $M
\geq 3$.
This justifies, that the supremum in~\eqref{eq_nablaviest}	is taken only over the set $j\in I_i(M) \cup \{i\}$.

The existence of $W_i$ in Assumption~\ref{ass_vi_existence} does \emph{not} imply that subsystem $\Sigma_i$ is individually ISS, i.e. $W_i$ \emph{cannot} be viewed as a finite-step Lyapunov function of subsystem $\Sigma_i$.
In other words, subsystem $\Sigma_i$ can be individually unstable, though there exists a function $W_i$ satisfying Assumption~\ref{ass_vi_existence}.
This comes from the fact that subsystems can have stabilizing effect on each other and an \emph{individually} unstable subsystem can be stabilized through receiving stabilizing effect from its neighbors; e.g. see illustrative examples in~\cite[Section V]{Geiselhart.2015} for more details.
The stabilizing effect is observed by looking at solutions of the system in few steps ahead, which is captured by~\eqref{eq_nablaviest}.
This in contrast with \emph{classic} small-gain theorems where potential stabilizing effects are \emph{not} taken into account, and hence every single subsystem has to be individually ISS.

To state the small-gain theorem, we also make the following \emph{uniformity} condition for the functions introduced in 
Assumption~\ref{ass_vi_existence}.

\begin{assumption}\label{ass_external_gains}
Assume that for subsystem $\Sigma_i$, $i \in\N$, the following hold.
\begin{enumerate}[(i)]
\item There exist $\ul{\omega},\ol{\omega}\in\Kinf$ so that for all $i\in \N$
\begin{equation}\label{eq_uniformity_alpha}
  \ul{\omega} \leq \ul{\omega}_i \leq \ol{\omega}_i \leq \ol{\omega} .%
\end{equation}
\item 
There exists $\alpha\in\Kinf$ with $\alpha <\id$ so that for all $i,j\in\N$
\begin{equation}\label{eq_uniformity_gamma}
   \gamma_{ij} \leq \alpha . 
\end{equation}
\item There is $\ol{\gamma}_u \in \K$ so that for all $i\in\N$%
\begin{equation}\label{eq_uniformity_gammaiu}
  \gamma_{iu} \leq \ol{\gamma}_u.
\end{equation}
\end{enumerate}
\end{assumption}

Conditions~\eqref{eq_uniformity_alpha} and~\eqref{eq_uniformity_gamma}  are necessary to construct an overall finite-step
Lyapunov function which is well-defined and coercive.
Condition~\eqref{eq_uniformity_gamma} rules out nonuniform decay rates for the solutions of the subsystems.
Without this uniform asymptotic stability of the interconnection need not hold, even if the system is linear and all internal and external gains are zero.
To see this, consider the following example
\begin{eqnarray*}
{x}_i^+ = \frac{i}{i+1} x_i + u,\quad i\in\N,%
\end{eqnarray*}
where $x_i,u\in \R$.
One can readily verify that the network is not exponentially stable in the absence of inputs. Moreover, for arbitrarily small inputs the network may exhibit unbounded state trajectories.
Finally, condition~\eqref{eq_uniformity_gammaiu} is also crucial for ISS of the overall system. Consider a network composed of subsystems of the form 
\begin{eqnarray*}
  {x}_i^+ = i u , \quad x_i,u\in \R, \, i \in\N .
\end{eqnarray*}
It is easy to verify that this network is not ISS.

Condition~\eqref{eq_uniformity_gamma} has been used in \cite{DaP20} for the analysis of nonlinear infinite networks via the small-gain approach.
It introduces some conservatism into small-gain analysis, compared with conditions for the gains, used in small-gain analysis of finite networks. Nevertheless, in contrast to \cite{DaP20} we use finite-step Lyapunov-like functions, which allows to significantly reduce the conservativeness of our analysis.


As we frequently work with distances, we state the following useful
sufficient condition for well-posedness of $\Sigma$, based on
Lemma~\ref{lem:well-posedness}. 
\begin{lemma}
\label{lem:Uniform-K-boundedness} 
Assume that all $f_i$ are uniformly $\K$-bounded with respect to $\A_i$,
that is,  there are $\kappa_1,\kappa_2\in\Kinf$ such that
one of the following conditions 
\begin{equation}
|f_i (\xi_i,\ol \xi_i,\mu_i)|_{\A_i}
\leq
\kappa_1 (|\xi_i|_{\A_i}) + \kappa_2 (|\ol \xi_i|) + \kappa_2 (|\mu_i|),
\label{eq:Well-posedness-lemma-A-i-1}
\end{equation}
\begin{equation}
|f_i (\xi_i,\ol \xi_i,\mu_i)|_{\A_i}
\leq
\kappa_1 (|\xi_i|_{\A_i}) + \kappa_2 (|\ol \xi_i|_{ \A(I_i)}) + \kappa_2 (|\mu_i|),
\label{eq:Well-posedness-lemma-A-i-2}
\end{equation}
holds for all $i\in\N$, all $\xi_i\in \R^{n_i}$, $\ol\xi_i \in  X({I_i})$ and $\mu_i\in \R^{p_i}$.
Further assume that $(\A_i)_{i\in\N}$ is uniformly bounded. Then $\Sigma$ is well-posed.
\end{lemma}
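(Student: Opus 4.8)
The plan is to reduce well-posedness of $\Sigma$ to the pointwise growth bound \eqref{eq:bound-lem:well-posedness} of Lemma~\ref{lem:well-posedness}, and then to convert the given \emph{distance}-based estimates into \emph{norm}-based estimates by exploiting uniform boundedness of the sets $\A_i$. Concretely, by the implication \eqref{itm:well-posedness-3} $\Rightarrow$ \eqref{itm:well-posedness-1} of Lemma~\ref{lem:well-posedness}, it suffices to exhibit a \emph{single} constant $C>0$ and a \emph{single} $\kappa\in\Kinf$, both independent of $i$, such that
$$
|f_i(\xi_i,\ol\xi_i,\mu_i)| \leq C + \kappa(|\xi_i|) + \kappa(|\ol\xi_i|) + \kappa(|\mu_i|)
$$
for all $i\in\N$ and all admissible arguments.

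The geometric heart of the argument uses Definition~\ref{def:uniformly bounded-A_i}: uniform boundedness of $(\A_i)_{i\in\N}$ yields a radius $C_0>0$ with $|a|\leq C_0$ for every $a\in\A_i$ and every $i\in\N$. Since each $\A_i$ is nonempty, for any $z\in\R^{n_i}$ and any $a\in\A_i$ we have $|z|\leq|z-a|+|a|\leq|z-a|+C_0$, so taking the infimum over $a\in\A_i$ gives the uniform two-sided estimate
\begin{equation*}
|z| \leq |z|_{\A_i} + C_0 \quad\text{and, symmetrically,}\quad |z|_{\A_i}\leq |z|+C_0, \qquad z\in\R^{n_i},\ i\in\N.
\end{equation*}
Applying the first inequality to $z=f_i(\xi_i,\ol\xi_i,\mu_i)$ and inserting hypothesis \eqref{eq:Well-posedness-lemma-A-i-1} bounds $|f_i(\xi_i,\ol\xi_i,\mu_i)|$ by $\kappa_1(|\xi_i|_{\A_i})+\kappa_2(|\ol\xi_i|)+\kappa_2(|\mu_i|)+C_0$. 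Using the second inequality as $|\xi_i|_{\A_i}\leq|\xi_i|+C_0$ together with the elementary bound $\kappa(a+b)\leq\kappa(2a)+\kappa(2b)$ valid for $\kappa\in\K$, I arrive at an estimate of the required shape with $C:=C_0+\kappa_1(2C_0)$ and $\kappa(r):=\max\{\kappa_1(2r),\kappa_2(r)\}$; the latter lies in $\Kinf$ as the maximum of two $\Kinf$ functions. This is precisely \eqref{eq:bound-lem:well-posedness}, so Lemma~\ref{lem:well-posedness} delivers well-posedness.

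For the alternative hypothesis \eqref{eq:Well-posedness-lemma-A-i-2}, the only modification concerns the internal-input term $\kappa_2(|\ol\xi_i|_{\A(I_i)})$. Here I invoke the identity $|\ol\xi_i|_{\A(I_i)}=\max_{j\in I_i}|\xi_j|_{\A_j}$ from \eqref{eq:Distance-in-overline-A_i}, combined with $|\xi_j|_{\A_j}\leq|\xi_j|+C_0\leq|\ol\xi_i|+C_0$, to obtain $|\ol\xi_i|_{\A(I_i)}\leq|\ol\xi_i|+C_0$; the remainder of the computation is identical, only the additive constant changes. In either case the hypotheses of Lemma~\ref{lem:well-posedness}\,\eqref{itm:well-posedness-3} are met.

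I expect the only genuine subtlety to be the conversion between the distance $|\cdot|_{\A_i}$ and the true norm $|\cdot|$, which is exactly where uniform boundedness of the family $(\A_i)_{i\in\N}$ is indispensable: without a single radius $C_0$ valid for \emph{all} $i$, the constant $C$ in \eqref{eq:bound-lem:well-posedness} would in general grow with $i$ and well-posedness could fail. Everything else is routine manipulation of comparison functions.
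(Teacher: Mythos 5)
Your proof is correct and follows essentially the same route as the paper's: both reduce to condition~\eqref{itm:well-posedness-3} of Lemma~\ref{lem:well-posedness}, convert the distance estimates into norm estimates via the uniform radius of the sets $\A_i$ (the paper writes this as $|f_i(\xi_i,\ol\xi_i,\mu_i)| - \sup_{y_i\in\A_i}|y_i| \leq |f_i(\xi_i,\ol\xi_i,\mu_i)|_{\A_i}$, which is your first two-sided inequality), use the weak subadditivity $\kappa(a+b)\leq\kappa(2a)+\kappa(2b)$, and treat case~\eqref{eq:Well-posedness-lemma-A-i-2} through the identity \eqref{eq:Distance-in-overline-A_i}. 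The only cosmetic imprecision is your closing remark that in the second case \emph{only the additive constant changes}: the doubling also propagates into the internal-input term, so you should take $\kappa(r):=\max\{\kappa_1(2r),\kappa_2(2r)\}$ there, exactly as the paper does with $\kappa:=\max\{\kappa_1\circ 2\id,\kappa_2\}$ followed by the estimate $\kappa(2|\ol\xi_i|)+\kappa(2C)$.
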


\begin{proof}
First let \eqref{eq:Well-posedness-lemma-A-i-1} hold and pick any $i\in\N$.
By the $\K$-boundedness of the $f_i$ and the definition of the distance
function $|\cdot|_{\A_i}$, we have for all $\xi_i\in \R^{n_i}$, $\ol\xi_i \in  X({I_i})$ and $\mu_i\in \R^{p_i}$ 
\begin{align*}
|f_i (\xi_i,\ol \xi_i,\mu_i)| &- \sup_{y_i \in \A_i} |y_i|  \leq |f_i (\xi_i,\ol \xi_i,\mu_i)|_{\A_i} \\
&\leq \kappa_1 (|\xi_i|_{\A_i}) + \kappa_2 (|\ol \xi_i|) + \kappa_2 (|\mu_i|) \\
&\leq \kappa_1 (|\xi_i|+\sup_{y_i \in \A_i} |y_i|) + \kappa_2 (|\ol \xi_i|) + \kappa_2 (|\mu_i|) .
\end{align*}
As $\kappa_1(a+b)\leq \kappa_1(2a)+\kappa_1(2b)$ for all $a,b\geq 0$, and in view of the uniform boundedness of $(\A_i)_{i\in\N}$, we obtain for all $\xi_i\in \R^{n_i}$, $\ol\xi_i \in  X({I_i})$ and $\mu_i\in \R^{p_i}$ that
\begin{align*}
|f_i (\xi_i,\ol \xi_i,&\mu_i)| \leq \kappa_1 (|\xi_i|+C) + \kappa_2 (|\ol \xi_i|) + \kappa_2 (|\mu_i|)  + C\\
& \leq \kappa_1 (2|\xi_i|) +\kappa_1 (2C) + \kappa_2 (|\ol \xi_i|) + \kappa_2 (|\mu_i|)  + C .
\end{align*}
Taking $\kappa := \max  \{\kappa_1 \circ 2\id,\kappa_2\}$ and $\ol C := \kappa_1 (2C) + C$ gives
\begin{align}
\label{eq:Uniform-K-boundedness-final-1} 
|f_i (\xi_i,\ol \xi_i,\mu_i)| \leq \ol C + \kappa (|\xi_i|)  + \kappa (|\ol \xi_i|)  + \kappa (|\mu_i|) ,
\end{align}
and $\Sigma$ is well-posed by Lemma~\ref{lem:well-posedness}.

Now assume~\eqref{eq:Well-posedness-lemma-A-i-2}. Repeating the steps from above, we have the following counterpart of  \eqref{eq:Uniform-K-boundedness-final-1} (with the same $\ol C,\kappa$):
\begin{align}
\label{eq:Uniform-K-boundedness-final-2} 
|f_i (\xi_i,\ol \xi_i,\mu_i)| \leq \ol C + \kappa (|\xi_i|)  + \kappa (|\ol \xi_i|_{ \A(I_i)})  + \kappa (|\mu_i|).
\end{align}
In view of \eqref{eq:Distance-in-overline-A_i} and the uniform boundedness of $(\A_i)_{i\in\N}$, it follows that 
\begin{eqnarray*}
\kappa (|\ol \xi_i|_{ \A(I_i)}) &=& \kappa (\sup_{j\in I_i}|\xi_j|_{ \A(I_j)})
\leq \kappa (\sup_{j\in I_i}(|\xi_j| + C))\\
&\leq& \kappa (\sup_{j\in I_i}|\xi_j| + C) = \kappa (|\ol \xi_i| + C)\\
&\leq&  \kappa (2|\ol \xi_i|) + \kappa(2 C).
\end{eqnarray*}
Using this estimate in \eqref{eq:Uniform-K-boundedness-final-2}, and applying Lemma~\ref{lem:well-posedness},
we see that $\Sigma$ is well-posed.
\end{proof}

We have the following well-posedness result.
\begin{theorem}
\label{thm:Well-posedness-coupled-systems}
If Assumption~\ref{ass:Properties-of-Sigma_i} holds, $(\A_i)_{i\in\N}$ is uniformly bounded and Assumptions~\ref{ass_vi_existence} and~\ref{ass_external_gains} are satisfied, then $\Sigma^M$ is well-posed.
\end{theorem}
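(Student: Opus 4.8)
The goal is to show that the $M$-step map $f^M$ sends $X \times U^M$ into $X$; equivalently, that for every $\xi \in X$ and every $u\in\UC$ (i.e. every choice of the first $M$ input values $u_0,\dots,u_{M-1}\in U$, extended arbitrarily to an element of $\UC$) the $M$-step solution satisfies $\sup_{i\in\N} |x_i(M,\xi,u)| < \infty$. Recall from Section~\ref{sec:System description} that the solution is always well-defined in the extended space $X_E$, since each component $x_i(M,\xi,u)$ depends on only finitely many initial conditions; hence the components $x_i(M,\xi,u)$ and the values $W_i(x_i(M,\xi,u))$ are meaningful for any $\xi \in X_E$, and only their uniform boundedness over $i$ is at stake. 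Note that the analogue for $\Sigma$ itself is not claimed, in accordance with Remark~\ref{rem:Well-posedness-of-iterates}. The plan is to exploit the finite-step estimate \eqref{eq_nablaviest} together with the uniformity of Assumption~\ref{ass_external_gains} to bound $W_i(x_i(M,\xi,u))$ uniformly in $i$, and then to translate this back into a uniform bound on $|x_i(M,\xi,u)|$.

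First I would fix the constant $C>0$ from the uniform boundedness of $(\A_i)_{i\in\N}$; since each $\A_i$ is nonempty, $\A = \prod_{i\in\N}\A_i$ is nonempty as well. For each $j\in\N$, choosing any $y_j\in\A_j$ with $|y_j|\le C$ gives $|\xi_j|_{\A_j} \le |\xi_j| + C \le |\xi|_\infty + C$. The upper bound in \eqref{eq_viest} together with \eqref{eq_uniformity_alpha} then yields $W_j(\xi_j) \le \ol\omega(|\xi_j|_{\A_j}) \le \ol\omega(|\xi|_\infty + C)$ for every $j$.

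Next I would bound the right-hand side of \eqref{eq_nablaviest}. Using $\gamma_{ij}\le\alpha$ from \eqref{eq_uniformity_gamma} and the fact that $\alpha\in\Kinf$ is increasing and continuous, $\sup_{j}\gamma_{ij}(W_j(\xi_j)) \le \alpha\big(\sup_j W_j(\xi_j)\big) \le \alpha\big(\ol\omega(|\xi|_\infty + C)\big)$, while $\gamma_{iu}\le\ol{\gamma}_u$ from \eqref{eq_uniformity_gammaiu} controls the input term. Substituting into \eqref{eq_nablaviest} gives, for every $i\in\N$,
\[
W_i(x_i(M,\xi,u)) \le \max\big\{ \alpha(\ol\omega(|\xi|_\infty + C)),\, \ol{\gamma}_u(\|u\|_\infty)\big\} =: R,
\]
a bound manifestly independent of $i$.

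Finally I would convert this back into a norm estimate. The lower bound in \eqref{eq_viest} and \eqref{eq_uniformity_alpha} give $\ul\omega(|x_i(M,\xi,u)|_{\A_i}) \le W_i(x_i(M,\xi,u)) \le R$, hence $|x_i(M,\xi,u)|_{\A_i} \le \ul\omega^{-1}(R)$; and since $\A_i$ lies in the ball of radius $C$, $|x_i(M,\xi,u)| \le |x_i(M,\xi,u)|_{\A_i} + C \le \ul\omega^{-1}(R) + C$. As the right-hand side is independent of $i$, we conclude $\sup_{i\in\N}|x_i(M,\xi,u)| < \infty$, so $x(M,\xi,u)\in X$ and $f^M:X\times U^M\to X$ is well-defined, i.e. $\Sigma^M$ is well-posed. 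I do not expect a serious obstacle here: the uniformity hypotheses of Assumption~\ref{ass_external_gains} are tailored precisely so that the supremum over neighbors in \eqref{eq_nablaviest} collapses to a single $\alpha$-term, and the uniform boundedness of the $\A_i$ is exactly what upgrades the distance bound to a norm bound. The only points demanding care are the interchange $\sup_j \alpha(\cdot) = \alpha(\sup_j \cdot)$ (valid by monotonicity and continuity of $\alpha$) and the fact that all estimates must be read componentwise in $X_E$ before boundedness in $X$ is established.
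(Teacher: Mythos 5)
Your proof is correct, and the initial estimates coincide with the paper's: both chains start from \eqref{eq_viest} and \eqref{eq_nablaviest}, use \eqref{eq_uniformity_alpha}--\eqref{eq_uniformity_gammaiu} to collapse $\sup_j \gamma_{ij}(W_j(\xi_j))$ into a single $\alpha$-term, and then undo the sandwich via $\ul\omega^{-1}$. Where you diverge is in the concluding step. The paper deliberately stops at a \emph{componentwise} estimate, $|x_i(M,\xi,u)|_{\A_i} \leq \kappa(|\xi_i|_{\A_i}) + \kappa(\sup_{j\in I_i(M)}|\xi_j|_{\A_j}) + \kappa(\|u\|_\infty)$ with $\kappa = \max\{\ul\omega^{-1}\circ\alpha\circ\ol\omega,\ \ul\omega^{-1}\circ\ol\gamma_u\}$, recognizes this as exactly \eqref{eq:Well-posedness-lemma-A-i-2} for the map $f^M$, and invokes Lemma~\ref{lem:Uniform-K-boundedness}, which in turn routes through the characterization of well-posedness in Lemma~\ref{lem:well-posedness}. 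You instead substitute the uniform bound $C$ on the sets $\A_i$ at both ends of the argument (bounding $|\xi_j|_{\A_j}\leq|\xi|_\infty + C$ on the input side and $|x_i|\leq|x_i|_{\A_i}+C$ on the output side), producing a direct, explicit uniform bound $\sup_i |x_i(M,\xi,u)| \leq \ul\omega^{-1}(R)+C$ that bypasses both lemmas entirely. Your route is shorter and self-contained; the paper's route keeps the local (neighbor-wise) structure of the estimate, reuses machinery already established, and, via Remark~\ref{rem:BRS}, delivers the quantitative $\K$-boundedness/BRS characterization of $\Sigma^M$ rather than bare well-definedness --- though your global bound could also be massaged into the form \eqref{eq:uniform boundedness of f} with little effort. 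Two minor remarks: for your interchange only the inequality $\sup_j\alpha(W_j(\xi_j))\leq\alpha(\sup_j W_j(\xi_j))$ is needed, which follows from monotonicity alone (continuity is only needed for equality); and your supremum over all $j\in\N$ is harmless, whereas the paper restricts to $j\in I_i(M)\cup\{i\}$, the finitely many indices actually influencing $x_i(M)$, consistent with the discussion following Assumption~\ref{ass_vi_existence}.
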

\begin{proof}
From Assumptions~\ref{ass_vi_existence} and~\ref{ass_external_gains} we have that 
\begin{align*}
 \ul{\omega} (|x_i(M,&\xi,u)|_{\A_i}) \leq 
 \ul{\omega}_i (|x_i(M,\xi,u)|_{\A_i}) \leq W_i (x_i(M,\xi,u))\\
&\leq \!\max \!\big\{\! \sup_{j \in \N} \gamma_{ij} (W_j(\xi_j)) , \gamma_{iu} (\|u\|_{\infty}) \big\}\\
&\leq \!\max \!\big\{\! \sup_{j \in \N} \gamma_{ij} (\ol{\omega}_j (|\xi_j|_{\A_j})) , \gamma_{iu} (\|u\|_{\infty}) \big\}\\
&\red{\leq} \!\max \!\big\{\! \sup_{j \in I_i(M)\cup\{i\}} \gamma_{ij} (\ol{\omega} (|\xi_j|_{\A_j})) , \gamma_{iu} (\|u\|_{\infty}) \big\}\\
&\leq \!\max \!\big\{\! \sup_{j \in I_i(M)\cup\{i\}} \alpha (\ol{\omega} (|\xi_j|_{\A_j})) ,  \ol{\gamma}_u (\|u\|_{\infty}) \big\}.
\end{align*}
Defining $\kappa:=\max\{\ul{\omega}^{-1}\circ\alpha\circ \ol{\omega}, \ul{\omega}^{-1}\circ \ol{\gamma}_u\}$, it holds that: 
\begin{align*}
|x_i(M,\xi,u)|_{\A_i}
&\leq \!\max \!\big\{\! \sup_{j \in I_i(M)\cup\{i\}} \kappa (|\xi_j|_{\A_j}) , \kappa  (\|u\|_{\infty}) \big\} \\
&\leq \kappa (|\xi_i|_{\A_i})  +  \kappa (\sup_{j \in I_i(M)}|\xi_j|_{\A_j}) + \kappa  (\|u\|_{\infty}) \\
\end{align*}
In view of \eqref{eq:Distance-in-overline-A_i} this is precisely 
the estimate \eqref{eq:Well-posedness-lemma-A-i-2} but for the map $f^M$ instead of $f$.
This shows the well-posedness of $\Sigma^M$ by Lemma~\ref{lem:Uniform-K-boundedness}.
\end{proof}


\begin{example}
\label{examp:Example-well-posedness} 
Here we show that well-posedness of the network does not necessarily follow from the validity of Assumptions~\ref{ass:Properties-of-Sigma_i},~\ref{ass_vi_existence} and~\ref{ass_external_gains} with $M=1$, if $(\A_i)_{i\in\N}$ is not uniformly bounded.

Let $\A_i:=[-i,i] \subset \R$, $i\in\N$ and let $\Sigma_i$ be given by equations:
\[
x_i^+ = f_i(x_i),
\]
where
\[
f_i(x_i) = 
\begin{cases}
x_i - \frac{1}{2}|x_i|_{\A_i}\sgn{(x_i)}, & \text{ if } x_i\not\in \A_i, \\ 
i x_i, & \text{ if } x_i \in [-\frac{1}{2},\frac{1}{2}], \\ 
g_i(x_i), & \text{ otherwise, }
\end{cases}
\]
where $g_i$ is chosen in a way that $\A_i$ is invariant w.r.t. $f_i$, and $f_i$ is continuous. This shows that 
Assumption~\ref{ass:Properties-of-Sigma_i} is satisfied.

Pick $W_i(x_i)=|x_i|_{\A_i}$. Then $W_i(x_i^+)=\frac{1}{2}W_i(x_i)$, and thus  Assumption~\ref{ass_vi_existence} holds with
 $\gamma_{ii}=\frac{1}{2}$, and $\gamma_{ij}=0$ otherwise.
Thus, each $\Sigma_i$ is exponentially ISS with respect to $\A_i$, $W_i$ is an eISS Lyapunov function for $\Sigma_i$ with respect to $\A_i$ and  
Assumptions~\ref{ass_vi_existence} and~\ref{ass_external_gains} are satisfied with $M=1$.

However, choosing $x^0:=\frac{1}{4}(1,1,\ldots) \in \A$, we see that $f (x^0) = \frac{1}{4}(1,2,\ldots,i,\ldots) \in \big\{\prod_{i\in\N}\A_i\big\} \backslash X$, i.e. $f_i$ is not well-defined at this point (and in all points in the interior of $[-\frac{1}{2},\frac{1}{2}]^\N$ in $X$).
\end{example}

Now we establish that the interconnected system $\Sigma$ is ISS under the given assumptions.
By Proposition~\ref{prop_iss}, our objective is to find a finite-step ISS Lyapunov function for the infinite network $\Sigma$.
This is achieved by the following \emph{relaxed} small-gain theorem.

\begin{theorem}\label{MT}
Consider a well-posed infinite network $\Sigma =(f,X,U)$ and nonempty closed sets $\A_i \subset \R^{n_i}$, $i\in\N$.
Let the set $\A = X \cap \prod_{i\in\N}\A_i$ be nonempty.
Suppose that Assumptions~\ref{ass_vi_existence} and~\ref{ass_external_gains} are satisfied.
Then $\Sigma$ admits an $M$-step ISS Lyapunov function with respect to~$\A$ of the form%
\begin{equation}\label{eq:Lyapunov-function-construction}
  V(\xi) = \sup_{i\in\N}  W_i(\xi_i) , \quad V:X \rightarrow \R_+ .%
\end{equation}
In particular, the function $V$ has the following properties.%
\begin{enumerate}[(i)]
\item For all $\xi \in X$ and $u \in \UC$%
\begin{equation}\label{eq:overall-V-decay}
V(x(M,\xi,u)) \leq \max \big\{ \alpha (V(\xi) ) , \ol{\gamma}_u (\|u\|_{\infty}) \big\} .%
\end{equation}
\item For every $\xi \in X$ the following inequalities hold:%
\begin{equation}\label{eq:Coercivity-bound-for-V}
\ul{\omega}(|\xi|_\A) \leq  V(\xi) \leq \ol{\omega}(|\xi|_\A) .%
\end{equation}
\end{enumerate}
In particular, $\Sigma$ is ISS with respect to $\A$.%
\end{theorem}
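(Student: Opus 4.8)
The plan is to verify directly that the candidate $V$ in \eqref{eq:Lyapunov-function-construction} is an $M$-step ISS Lyapunov function for $\Sigma$ with respect to $\A$ by establishing the two listed properties, and then to invoke Proposition~\ref{prop_iss}. Since $\Sigma$ is assumed well-posed, $\Sigma^M$ is well-posed by Remark~\ref{rem:Well-posedness-of-iterates}, so $x(M,\xi,u)\in X$ and all the suprema below are over bounded families and hence finite.

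For the coercivity bound \eqref{eq:Coercivity-bound-for-V}, I would start from the per-subsystem sandwich \eqref{eq_viest} together with the uniform envelope \eqref{eq_uniformity_alpha}, which give $\ul{\omega}(|\xi_i|_{\A_i})\le W_i(\xi_i)\le\ol{\omega}(|\xi_i|_{\A_i})$ for every $i\in\N$. Taking the supremum over $i$ and using that a continuous, strictly increasing function commutes with suprema of bounded families (so $\sup_i \ul{\omega}(a_i)=\ul{\omega}(\sup_i a_i)$, and likewise for $\ol{\omega}$), one obtains $\ul{\omega}\big(\sup_i|\xi_i|_{\A_i}\big)\le V(\xi)\le\ol{\omega}\big(\sup_i|\xi_i|_{\A_i}\big)$. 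The decisive step is the identity $\sup_{i\in\N}|\xi_i|_{\A_i}=|\xi|_\A$, which is exactly Lemma~\ref{lem:Alternative-A-representation}; substituting it yields \eqref{eq:Coercivity-bound-for-V}, and in particular the upper estimate shows $V(\xi)<\infty$ for all $\xi\in X$.

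For the decay \eqref{eq:overall-V-decay}, I would begin with the $M$-step estimate \eqref{eq_nablaviest} for each $i$. Using $\gamma_{ij}\le\alpha$ from \eqref{eq_uniformity_gamma} and monotonicity, $\gamma_{ij}(W_j(\xi_j))\le\alpha(W_j(\xi_j))\le\alpha\big(\sup_k W_k(\xi_k)\big)=\alpha(V(\xi))$, so the inner supremum over $j$ is dominated by $\alpha(V(\xi))$; the external term is controlled by $\gamma_{iu}\le\ol{\gamma}_u$ from \eqref{eq_uniformity_gammaiu}. This gives $W_i(x_i(M,\xi,u))\le\max\{\alpha(V(\xi)),\ol{\gamma}_u(\|u\|_{\infty})\}$ uniformly in $i$, and taking the supremum over $i$ on the left reconstitutes $V(x(M,\xi,u))$, proving \eqref{eq:overall-V-decay}. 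Because $\alpha<\id$ by \eqref{eq_uniformity_gamma}, the two properties together identify $V$ as an $M$-step ISS Lyapunov function, and Proposition~\ref{prop_iss} then yields ISS of $\Sigma$.

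The main obstacle I anticipate is not the two inequalities themselves, which are essentially bookkeeping with suprema once the uniform bounds \eqref{eq_uniformity_alpha}--\eqref{eq_uniformity_gammaiu} are in force, but rather confirming that $V$ is a legitimate Lyapunov function in the sense of Definition~\ref{def:eISS-Lyapunov-Function}. A supremum of the continuous functions $W_i$ is only lower semicontinuous in general and need not be continuous on $\ell^\infty(\N,(n_i))$, so the uniform sandwich has to be exploited to recover the regularity that is actually needed (e.g. continuity on $\A$ follows from the upper bound $V(\xi)\le\ol{\omega}(|\xi|_\A)$, and the trajectory-wise iteration of \eqref{eq:overall-V-decay} with \eqref{eq:Coercivity-bound-for-V} reconstructs the $\KL$-estimate without appealing to continuity elsewhere). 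A second point to secure before applying Proposition~\ref{prop_iss} is the $\K$-boundedness of $f$ with respect to $\A$; I would extract this from well-posedness together with the invariance of $\A$ under $f(\cdot,0)$, rather than assume it outright.
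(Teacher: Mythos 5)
Your verification of the two Lyapunov properties is exactly the paper's proof: the same sandwich argument via \eqref{eq_viest}, \eqref{eq_uniformity_alpha} and the identity $\sup_{i}|\xi_i|_{\A_i}=|\xi|_\A$ of Lemma~\ref{lem:Alternative-A-representation} for \eqref{eq:Coercivity-bound-for-V}, and the same supremum bookkeeping with \eqref{eq_nablaviest}, \eqref{eq_uniformity_gamma}, \eqref{eq_uniformity_gammaiu} for \eqref{eq:overall-V-decay}, followed by the appeal to Proposition~\ref{prop_iss}. Your continuity caveat is also well placed: it is precisely the content of Remark~\ref{rem:Continuity-of-LF-for-composite-system}, and you are right that the trajectory estimate obtained by iterating \eqref{eq:overall-V-decay} together with \eqref{eq:Coercivity-bound-for-V} never uses continuity of $V$.

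The one genuine flaw is your final claim that the $\K$-boundedness of $f$ with respect to $\A$, required by Proposition~\ref{prop_iss}, can be ``extracted from well-posedness together with the invariance of $\A$ under $f(\cdot,0)$''. It cannot: well-posedness only yields the norm bound of Lemma~\ref{lem:well-posedness}\,(ii), and when $\A$ is unbounded this bound says nothing about $|f(\xi,u)|_\A$ as a function of $|\xi|_\A$. Concretely, take $\Sigma_1\colon x_1^+=x_1$ with $\A_1=\R$ (so $W_1\equiv 0$); $\Sigma_2$ with state $(a,b)\in\R^2$, $\A_2=\{0\}$, dynamics $a^+=\bigl(1+\sqrt{|x_1|}\bigr)\,b$, $b^+=0$; and $x_i^+=0$ with $\A_i=\{0\}$ for $i\geq 3$. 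This network is well-posed (use $\sqrt{|x_1|}\,|b|\leq\tfrac12\bigl(|x_1|+|b|^2\bigr)$ in Lemma~\ref{lem:well-posedness}\,(iii)), $\A$ is invariant under $f(\cdot,0)$, and Assumptions~\ref{ass_vi_existence} and~\ref{ass_external_gains} hold with $M=2$ and zero internal gains, since $x_2(2,\xi,u)=0$ identically; yet for $\xi=(s,(0,1),0,\ldots)$ one has $|\xi|_\A=1$ while $|x(1,\xi,0)|_\A=1+\sqrt{|s|}$, so \eqref{eq:K-bound-2} fails and $\Sigma$ is not ISS with respect to $\A$, although $\Sigma^2$ is. Hence for $M>1$ the $\K$-boundedness is a genuinely additional hypothesis: it must be imposed, as Proposition~\ref{prop_iss} does (and as the paper's own proof imports silently through that proposition), and cannot be manufactured from well-posedness and invariance. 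Only in the case $M=1$ is it automatic, since then \eqref{eq_nablaviest} sandwiched by \eqref{eq_viest} and Lemma~\ref{lem:Alternative-A-representation} gives
\begin{equation*}
|f(\xi,u)|_\A \leq \ul{\omega}^{-1}\Bigl(\max\bigl\{\alpha\circ\ol{\omega}(|\xi|_\A),\,\ol{\gamma}_u(\|u\|_\infty)\bigr\}\Bigr),
\end{equation*}
which implies \eqref{eq:K-bound-2}.
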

\begin{proof}
From the second inequalities~\eqref{eq_viest} and~\eqref{eq_uniformity_alpha}, we have
\begin{align*}
  V(\xi) &= \sup_{i\in\N}  W_i(\xi_i)  \leq \sup_{i\in\N} \ol\omega_i (\abs{\xi_i}_{\A_i})\leq \sup_{i\in\N} \ol\omega (\abs{\xi_i}_{\A_i}) .
\end{align*}
It follows from the monotonicity of $\ol\omega$ and Lemma~\ref{lem:Alternative-A-representation} that
\begin{equation*}
V(\xi) \leq \ol\omega (\sup_{i\in\N} \abs{\xi_i}_{\A_i}) = \ol\omega (\abs{\xi}_\A) .
\end{equation*}
This shows that $V$ is well-defined (i.e. finite valued) and gives an upper bound in 
\eqref{eq:Coercivity-bound-for-V}.
Similarly, it holds that
\begin{equation*}
V(\xi) \geq \ul\omega (\sup_{i\in\N} \abs{x_i}_{\A_i}) = \ul\omega (\abs{\xi}_\A).
\end{equation*}

Given any $\xi \in X$ and $u\in\UC$, we also have that
\begin{align*}
& \hspace{-1.2cm} V (x(M,\xi,u)) 
=  \sup_{i\in\N}  W_i(x_i(M,\xi,u)) \\
\qquad \stackrel{ \eqref{eq_nablaviest}}{\leq} & \sup_{i\in\N} \max\big\{\sup_{j\in\N}\gamma_{ij}\big(W_j(\xi_j)\big),\gamma_{iu}(\norm{u}_{\infty})\big\} \\
\qquad \stackrel{ \eqref{eq_uniformity_gamma}}{\leq} &\sup_{i\in\N} \max \big\{\sup_{j\in\N} \alpha  \big(W_j (\xi_j)\big) , \gamma_{iu} (\norm{u}_{\infty}) \big\} \\
\qquad = & \sup_{i\in\N}  \max \big\{ \alpha  \big(\sup_{j\in\N} W_j (\xi_j)\big) , \gamma_{iu} (\norm{u}_{\infty}) \big\} \\
\qquad = & \max \big\{ \alpha  \big(V (\xi)\big) , \sup_{i\in\N}\gamma_{iu} (\norm{u}_{\infty}) \big\} \\
\qquad \stackrel{ \eqref{eq_uniformity_gammaiu}}{\leq} & \max \big\{ \alpha  \big(V (\xi)\big) , \ol\gamma_{u} (\norm{u}_{\infty}) \big\} ,
\end{align*}
which is identical to~\eqref{eq:overall-V-decay}. Hence $V$ is a finite-step Lyapunov function for $\Sigma$ with respect to~$\A$ and by Proposition~\ref{prop_iss} $\Sigma$ is ISS in~$\A$.
\end{proof}

\begin{remark}
\label{rem:Continuity-of-LF-for-composite-system} 
Continuity of $V$ can be inferred from continuity of $W_i$, provided all $W_i$ have 
a uniform modulus of a continuity on bounded balls. For example, this holds if all $W_i$ are 
locally Lipschitz continuous on bounded balls with a uniform in $i$ Lipschitz constant.
\end{remark}


In the special case $M=1$, Theorem~\ref{MT} reduces to a classic small-gain theorem, and it is a discrete-time counterpart of~\cite[Theorem 1]{DaP20}.
Note that in the case $M=1$ the well-posedness of $\Sigma$
 is no more an assumption in Theorem~\ref{MT}, as it follows from other assumptions of Theorem~\ref{MT} by invocation of Theorem~\ref{thm:Well-posedness-coupled-systems}.
We also note that the conservatism of Theorem~\ref{MT} basically comes from condition~\eqref{eq_uniformity_alpha} which demands that all the coupling gains $\gamma_{ij}$'s have to be less than identity.
Nevertheless, for exponentially ISS systems, we are able to establish the \emph{necessity} of our small-gain theorem, which shows the \emph{non}-conservatism of the proposed small-gain condition in this case.

\begin{theorem}\label{nec}
Consider a well-posed infinite network $\Sigma$.
Let $\Sigma$ be eISS with respect to $\A = X \cap \prod_{i\in\N}\A_i$, with  nonempty closed sets $\A_i \subset \R^{n_i}$.
Then there exist continuous functions $W_i:\R^{n_i} \rightarrow \R_+$, $i \in \N$ and $M \in \N$ such that Assumptions~\ref{ass_vi_existence} and~\ref{ass_external_gains} hold.
\end{theorem}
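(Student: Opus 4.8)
The plan is to take the simplest possible candidate, namely the componentwise distance $W_i(\xi_i) := |\xi_i|_{\A_i}$, and to show that eISS of $\Sigma$ forces these functions to satisfy Assumptions~\ref{ass_vi_existence} and~\ref{ass_external_gains} once $M$ is chosen large enough. Since $\Sigma$ is eISS with respect to $\A$, there are $C\geq 1$, $\rho\in[0,1)$ and $\gamma\in\K$ with
\begin{equation*}
|x(k,\xi,u)|_\A \leq \max\{C\rho^k|\xi|_\A,\ \gamma(\|u\|_{\infty})\},\quad k\in\N_0.
\end{equation*}
Because $\rho<1$ and $C\geq 1$, I fix once and for all an integer $M$ with $C\rho^M<1$ (any $M\geq 1$ works when $\rho=0$, and $M > \ln C/\ln(1/\rho)$ otherwise).

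The crucial ingredient is Lemma~\ref{lem:Alternative-A-representation}, which yields the exact identity $|\xi|_\A=\sup_{j\in\N}|\xi_j|_{\A_j}$ on $X=\ell^\infty(\N,(n_i))$; this is what lets a global set-distance estimate be read off componentwise. Fixing $\xi\in X$ and $u\in\UC$, for every $i\in\N$ I estimate
\begin{align*}
W_i(x_i(M,\xi,u)) &= |x_i(M,\xi,u)|_{\A_i} \leq \sup_{k\in\N}|x_k(M,\xi,u)|_{\A_k}\\
&= |x(M,\xi,u)|_\A \leq \max\{C\rho^M|\xi|_\A,\ \gamma(\|u\|_{\infty})\}.
\end{align*}
Rewriting $C\rho^M|\xi|_\A=\sup_{j\in\N}\big(C\rho^M|\xi_j|_{\A_j}\big)=\sup_{j\in\N}\gamma_{ij}(W_j(\xi_j))$ with the constant linear gain $\gamma_{ij}(s):=C\rho^M s$, this is precisely the decay inequality~\eqref{eq_nablaviest} with external gain $\gamma_{iu}:=\gamma$. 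Nothing is lost by taking the supremum over all $j\in\N$ rather than over $I_i(M)\cup\{i\}$, since those extra terms only enlarge the right-hand side.

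It then remains to check the uniformity conditions of Assumption~\ref{ass_external_gains}. With $W_i=|\cdot|_{\A_i}$ the bounds~\eqref{eq_viest} hold with $\ul\omega_i=\ol\omega_i=\id$, so~\eqref{eq_uniformity_alpha} is satisfied with $\ul\omega=\ol\omega=\id\in\Kinf$. The gains $\gamma_{ij}(s)=C\rho^M s$ are identical for all $i,j$ and, by the choice $C\rho^M<1$, the function $\alpha(s):=C\rho^M s$ lies in $\Kinf$ and satisfies $\alpha<\id$, giving~\eqref{eq_uniformity_gamma}; when $\rho=0$ the gains are simply $\gamma_{ij}=0\in\{0\}$, which is admissible. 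Finally $\gamma_{iu}=\gamma$ for all $i$, so~\eqref{eq_uniformity_gammaiu} holds with $\ol\gamma_u:=\gamma$. Continuity of each $W_i$ is immediate, as $\xi_i\mapsto|\xi_i|_{\A_i}$ is $1$-Lipschitz. I do not expect a genuine obstacle here: the whole argument hinges on the metric geometry of the $\ell^\infty$-sum encapsulated in Lemma~\ref{lem:Alternative-A-representation}, and the only points requiring care are selecting $M$ so that the internal gains (necessarily linear, since $\Sigma$ is exponentially ISS) drop below the identity, together with the harmless degenerate case $\rho=0$.
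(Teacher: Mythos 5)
Your proposal is correct and follows essentially the same route as the paper's proof: both take $W_i(\xi_i)=|\xi_i|_{\A_i}$, choose $M$ so that the $M$-step contraction factor $c:=C\rho^M$ drops below $1$, and use the identity $|\xi|_\A=\sup_{j\in\N}|\xi_j|_{\A_j}$ from Lemma~\ref{lem:Alternative-A-representation} to read the global set-distance estimate off componentwise, yielding the constant linear gains $\gamma_{ij}(s)=cs$, $\alpha=c\,\id<\id$, and $\gamma_{iu}=\ol\gamma_u=\gamma$. Your explicit selection of $M$ and your handling of the degenerate case $\rho=0$ merely spell out what the paper leaves implicit in its opening assertion that eISS yields some $M\in\N$ and $c<1$ with the $M$-step estimate.
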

	\begin{proof}
		The eISS property of the system $\Sigma$ implies that there exist $M \in \N$, $\gamma \in \KC$ and $c<1$ such that for any initial state $\xi \in X$ and any $u\in\UC$ we have
		\begin{align}\label{eiss}
		|x(M,\xi,u)|_\A \leq \max\big\{c|\xi|_\A , \gamma(\|u\|_{\infty}) \big\}.
		\end{align}
		Define the function $W_i(x_i):=|x_i|_{\A_i}$, $V_i:\R^{n_i} \rightarrow \R_+$ for $i \in \N$.
		This choice of $W_i$ clearly satisfies~\eqref{eq_viest} and~\eqref{eq_uniformity_alpha}	with $b=1$, $\ul\omega = \ul\omega_i = \ol\omega_i = \ol\omega = 1$.
We also have
\begin{align}\label{final4}
W_i(x_i({M},\xi,u ))=& |x_i({M},\xi,u )|_{\A_i} \leq |x({M},\xi,u )|_{\A} \nonumber\\
 \mathop  \le \limits^{(\ref{eiss})}& \max\big\{ c |\xi|_{\A} , \gamma(\|u\|_{\infty})\big\} \nonumber\\
 =& \max\big\{c\sup_{j\in\N}|{\xi_j}|_{\A_j} , \gamma(\|u\|_{\infty})\big\} \nonumber\\
 =& \max\big\{\sup_{j\in\N}c|{\xi_j}|_{\A_j} , \gamma(\|u\|_{\infty})\big\} .
\end{align}
We can rewrite~\eqref{final4} as
\begin{align}\label{final5}
W_i (x_i({M},\xi,u )) \leq \max\big\{\sup_{j\in\N}c W_j(\xi_j) , \gamma(\|u\|_{\infty})\big\} ,
\end{align}
which implies that~\eqref{eq_nablaviest} is satisfied with $\gamma_{ij} = c$ and $\gamma_{iu} = \gamma$ for all $i,j \in \N$.
			Conditions~\eqref{eq_uniformity_gamma} and~\eqref{eq_uniformity_gammaiu} also hold with $\alpha = c$ and $\ol\gamma_{u} = \gamma$, which completes the proof.
\end{proof}

Note that in the proof of Theorem~\ref{nec} we \emph{explicitly} construct the individual ISS Lyapunov-like functions $W_i$ and the corresponding gain functions.
Moreover, there is a positive integer $M$ for which all the coupling gain functions $\gamma_{ij}$ can be chosen \emph{identically}.
In the spirit of our recent work~\cite{Noroozi.2020a}, we believe that one can gain from these observations for a distributed control design based on our small-gain theorem.
This is our ongoing research work.

\section{From Infinite to Finite Networks}\label{sec:From-infinite-to-finite-networks}

The underlying idea to deal with infinite networks is to develop tools for analysis and design of arbitrarily large-but-finite networks, which are independent of the possibly \emph{unknown} size of the network.
The question arises whether the quantitative stability indices, e.g. decay rate of solutions, obtained for an over-approximating infinite network are preserved for the original large-but-finite network.
This section addresses this question. We show that the quantitative ISS
indices given by Theorem~\ref{MT} will be preserved for any truncation of
an infinite network if $M=1$ in Assumption~\ref{ass_vi_existence}.
For $M>1$, which is a more complex case, we provide a scale-free ISS stabilization
result.

For the purpose of the truncation process, we only consider the
  first $n \in \N$ subsystems of $\Sigma$ and denote the truncated system
  by $\Sigma^{\langle n \rangle}$.  As the states $x_j$ for $j>n$ are no
  longer present in $\Sigma^{\langle n \rangle}$, but \emph{in general}
  may still appear in some of the equations, we interpret these $x_j$ as
  additional \emph{external} inputs. We denote by
  \begin{equation*}
      I^{\langle n \rangle} := \bigcup_{i=1}^n I_i \setminus \{ 1,\ldots,n
      \}  
  \end{equation*}
  the set of neighbors of the first $n$ systems. By
  Assumption~\ref{ass:Properties-of-Sigma_i}\,\eqref{itm:gen-assumption-2}
  the set $I^{\langle n \rangle}$ is finite.
  The truncation of the infinite network
  is represented by
\begin{align}
\label{eq_interconnection-truncated}
  \Sigma^{\langle n \rangle}:\quad (x^{\langle n \rangle})^+ = f^{\langle n \rangle} ( x^{\langle n \rangle} , \tilde x, u^{\langle n \rangle} ),
\end{align}
%
%
%
%
where the state vector $x^{\langle n \rangle} = (x_i)_{1\leq
    i\leq n} \in \R^N$, $N := \sum_{i=1}^n n_i$, the input vector
  $u^{\langle n \rangle} = (u_i)_{1\leq i\leq n}\in \R^P$, $P :=
  \sum_{i=1}^n p_i$, the additional input vector 
$\tilde x := (x_j)_{j \in I^{\langle n \rangle}} \in \R^L$, $L:=
\sum_{j\in I^{\langle n \rangle}} n_j$ and the dynamics 
$f^{\langle n \rangle}= (f_i)_{1\leq i\leq n}$, $f^{\langle n
  \rangle}:\R^N\tm \R^L \tm \R^P \to\R^N$. 

The network $ \Sigma^{\langle n \rangle}$ is obtained by keeping the first $n$ subsystems of the infinite network $\Sigma$ together with the associated interconnection between these subsystems. 
Note that in our formulation we do not neglect  the other subsystems $\Sigma_i$ with $i > n$ (i.e. $\tilde x$), but instead we view them as additional inputs to the network $ \Sigma^{\langle n \rangle}$.
Clearly removal of all subsystems $\Sigma_i, i > n$ is covered by our formulation as a special case if one sets $\tilde x \equiv 0$.

In the following we derive conditions under which $\Sigma^{\langle n \rangle}$ is
  ISS in the set $\A^{\langle n \rangle} := \Pi_{i=1}^n \A_i$ and compute the corresponding ISS gain functions under assumption that $\Sigma$ is ISS.

\begin{theorem}\label{thm:truncation}
Consider the infinite network $\Sigma$.
Suppose that Assumption~\ref{ass_vi_existence} with $M=1$ and Assumption~\ref{ass_external_gains} are satisfied.
Then for each $n\in\N$ the function $V^{\langle n \rangle} : \R^N \to \R_+$ defined by
\begin{equation}\label{eq:Lyapunov-function-construction-truncated}
  V^{\langle n \rangle}(\xi^{\langle n \rangle}) = \max_{1\leq i\leq n}  W_i(\xi_i) ,
\end{equation}
satisfies for all $\xi^{\langle n \rangle} \in \R^N,\tilde \xi \in X$ and $u^{\langle n \rangle} \in \R^P$:
\begin{align}
&\ul{\omega}(|\xi|_{\A^{\langle n \rangle}}) \leq  V^{\langle n \rangle} (\xi^{\langle n \rangle}) \leq \ol{\omega}(|\xi|_{\A^{\langle n \rangle}}) , \label{eq:Coercivity-bound-for-V-truncated}\\
&V^{\langle n \rangle}(f^{\langle n \rangle}(\xi^{\langle n \rangle},\tilde \xi,u^{\langle n \rangle})) \leq \max \big\{ \alpha (V^{\langle n \rangle}(\xi^{\langle n \rangle}) ) ,  \alpha\circ\ol\omega ( |{\tilde \xi}|_\infty) , \nonumber\\
&\qquad\qquad\qquad\qquad\qquad\qquad\ol{\gamma}_u (\|u^{\langle n \rangle}\|_{\infty}) \big\} .
\label{eq:overall-V-decay-truncated}
\end{align}
Thus, $V^{\langle n \rangle}$ is a 1-step ISS Lyapunov function for $\Sigma^{\langle n \rangle}$ and hence 
$\Sigma^{\langle n \rangle}$ is ISS with respect to~$\A^{\langle n \rangle}$.
\end{theorem}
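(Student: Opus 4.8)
The plan is to verify directly that the finite-dimensional candidate $V^{\langle n\rangle}$ in \eqref{eq:Lyapunov-function-construction-truncated} satisfies the two displayed estimates, and then to invoke the (finite-dimensional) direct ISS Lyapunov argument underlying Proposition~\ref{prop_iss}. First I would record the elementary finite-product analogue of \eqref{eq:Distance-in-overline-A_i}: on $\R^N$ equipped with the sup-type norm, the distance to $\A^{\langle n\rangle}=\prod_{i=1}^n\A_i$ equals $|\xi^{\langle n\rangle}|_{\A^{\langle n\rangle}}=\max_{1\le i\le n}|\xi_i|_{\A_i}$, since the minimizing coordinates can be chosen independently. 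With this identity the coercivity bound \eqref{eq:Coercivity-bound-for-V-truncated} is immediate: applying \eqref{eq_viest} to each $W_i$, using the uniform envelopes \eqref{eq_uniformity_alpha}, and pulling the monotone functions $\ul\omega,\ol\omega$ through the finite maximum yields $\ul\omega(\max_{1\le i\le n}|\xi_i|_{\A_i})\le \max_{1\le i\le n}W_i(\xi_i)\le \ol\omega(\max_{1\le i\le n}|\xi_i|_{\A_i})$, which is exactly \eqref{eq:Coercivity-bound-for-V-truncated}.

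The core of the proof is the decay estimate \eqref{eq:overall-V-decay-truncated}. Here I would exploit that, with $M=1$, the component $f_i(\xi_i,\ol\xi_i,u_i)$ depends only on $\xi_i$, on $\ol\xi_i=(\xi_j)_{j\in I_i}$ and on $u_i$, and that the supremum in \eqref{eq_nablaviest} is effectively over the finite set $I_i\cup\{i\}$. The key structural observation is that for $i\le n$ every neighbour index $j\in I_i$ lies in $\{1,\dots,n\}\cup I^{\langle n\rangle}$, by the very definition of $I^{\langle n\rangle}$. Hence, extending the given data to a full state $\zeta\in X$ (setting $\zeta_j=\xi_j$ for $j\le n$, $\zeta_j$ equal to the corresponding coordinate of $\tilde\xi$ for $j\in I^{\langle n\rangle}$, and filling the remaining, irrelevant coordinates with a point of $\A$), the one-step inequality \eqref{eq_nablaviest} applies to each $i\le n$. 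I would then split the inner supremum into the internal indices $j\le n$ and the external indices $j\in I^{\langle n\rangle}$: for the former, $\gamma_{ij}(W_j(\xi_j))\le\alpha(W_j(\xi_j))\le\alpha(V^{\langle n\rangle}(\xi^{\langle n\rangle}))$ by \eqref{eq_uniformity_gamma} and the definition of $V^{\langle n\rangle}$; for the latter, $\gamma_{ij}(W_j(\tilde\xi_j))\le\alpha(\ol\omega(|\tilde\xi_j|_{\A_j}))\le\alpha\circ\ol\omega(|\tilde\xi|_\infty)$ using \eqref{eq_uniformity_gamma}, the upper bound in \eqref{eq_viest} and the envelope \eqref{eq_uniformity_alpha}; and the external-input term is controlled by $\gamma_{iu}(\|u\|_\infty)\le\ol\gamma_u(\|u^{\langle n\rangle}\|_\infty)$ via \eqref{eq_uniformity_gammaiu} (taking the input coordinates with index $>n$ to be zero). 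Taking the maximum over $i\le n$, and noting that all three bounds are uniform in $i$, gives precisely \eqref{eq:overall-V-decay-truncated}.

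Finally, since $\alpha<\id$, the two estimates show that $V^{\langle n\rangle}$ is a $1$-step ISS Lyapunov function for the finite-dimensional system $\Sigma^{\langle n\rangle}$, with $\tilde\xi$ and $u^{\langle n\rangle}$ playing the role of external inputs; ISS of $\Sigma^{\langle n\rangle}$ with respect to $\A^{\langle n\rangle}$ then follows from the direct ISS Lyapunov theorem applied to this genuinely finite-dimensional truncation, whose $\K$-boundedness is automatic from continuity of the finitely many $f_i$. I expect the main obstacle to be the bookkeeping in the decay step, namely the legitimate reduction of the global inequality \eqref{eq_nablaviest} to the truncated data: one must argue that only neighbours in $\{1,\dots,n\}\cup I^{\langle n\rangle}$ enter, so that the irrelevant fill-in coordinates of $\zeta$ cannot spoil the bound, and one must correctly reclassify the external neighbour states $\tilde\xi$ as inputs, which is exactly what produces the extra middle term $\alpha\circ\ol\omega(|\tilde\xi|_\infty)$ absent from the infinite-network estimate \eqref{eq:overall-V-decay}.
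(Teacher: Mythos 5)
Your proposal is correct and takes essentially the same route as the paper's proof: both establish \eqref{eq:Coercivity-bound-for-V-truncated} from the finite-product identity $|\xi^{\langle n \rangle}|_{\A^{\langle n \rangle}}=\max_{1\leq i\leq n}|\xi_i|_{\A_i}$ together with \eqref{eq_viest} and \eqref{eq_uniformity_alpha}, derive \eqref{eq:overall-V-decay-truncated} by applying \eqref{eq_nablaviest} with $M=1$ and splitting the supremum into internal indices $j\leq n$ and external neighbours in $I^{\langle n \rangle}$ (your explicit fill-in of the irrelevant coordinates is done implicitly in the paper by writing $\xi=(\xi^{\langle n \rangle},(\tilde{\xi}_j)_{j\geq n})\in X$), and conclude via the finite-dimensional direct ISS Lyapunov theorem of \cite{Noroozi.2018a}. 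Your closing aside that $\K$-boundedness is ``automatic from continuity'' is slightly off (with respect to a set this is not implied by continuity alone, owing to the additive constant), but it is also immaterial, since for $M=1$ the required bound already follows from \eqref{eq:overall-V-decay-truncated} and \eqref{eq:Coercivity-bound-for-V-truncated}.
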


\begin{proof}
The proof of~\eqref{eq:overall-V-decay-truncated} follows the same arguments as those for~\eqref{eq:overall-V-decay} in Theorem~\ref{MT}.

The Assumption~\ref{ass_vi_existence} for $M=1$ ensures that for all $i\in\{1,\ldots,n\}$ and all $j\in\N$ there are $\gamma_{ij} \in  \Kinf \cup \{0\}$ and $\gamma_{iu} \in \K$ so that for all $\xi = (\xi^{\langle n \rangle},(\tilde{\xi}_j)_{j\geq n}) \in X$, all $u^{\langle n \rangle} \in \UC^{\langle n \rangle}:=\ell_\infty(\N,\R^P)$  the following holds 
%
\begin{align*}
& W_i(x_i(1,\xi,u)) = W_i (f_i(\xi_i,\bar{\xi}_i,u_i)) \nonumber\\
&\!\leq \!\max \!\big\{\! \sup_{1\leq j\leq n} \gamma_{ij} (W_j(\xi_j)),
 \sup_{j \geq n} \gamma_{ij} (W_j(\xi_j)) , \gamma_{iu} (\|u^{\langle n \rangle}\|_{\infty}) \big\}, \nonumber\\
& \stackrel{ \eqref{eq_uniformity_gamma}, \eqref{eq_uniformity_gammaiu}}{\leq} \! \!\!\!\max \!\big\{\! \max_{1\leq j  \leq n} \!\alpha (W_j(\xi_j)) ,
  \alpha (\sup_{j\ge n} W_j(\tilde \xi_j)),\!\ol\gamma_{u} (\|u^{\langle n \rangle}\|_{\infty}) \big\}\nonumber\\
& \stackrel{ \eqref{eq_uniformity_alpha}}{\leq} \! \!\max \!\big\{\! \max_{1\leq j \leq n} \!\alpha (W_j(\xi_j)) , 
  \alpha\circ\ol\omega (|\tilde \xi|_\infty),\ol\gamma_{u} (\|u^{\langle n \rangle}\|_{\infty}) \big\} .
\end{align*}
This shows that $W_i$ are (1-step) ISS Lyapunov functions for subsystems of $\Sigma^{\langle n\rangle}$. Moreover, it follows from~\eqref{eq:Lyapunov-function-construction-truncated} that \eqref{eq:overall-V-decay-truncated} holds. 
%
Hence $ V^{\langle n\rangle}$ is a 1-step ISS Lyapunov function for $\Sigma^{\langle n\rangle}$ and by \cite[Theorem 7]{Noroozi.2018a} we conclude ISS of $\Sigma^{\langle n\rangle}$ in the set $\A^{\langle n\rangle}$. 
\end{proof}

As seen  from~\eqref{eq:overall-V-decay-truncated}, the \emph{decay} rate $\alpha$ is \emph{preserved} under the truncation. Moreover, if the additional external inputs $\tilde x$ are not present to $\Sigma^{\langle n\rangle}$, the input gain $\ol\gamma_u$ is preserved.
Therefore, stability/performance indices of the overall system will be \emph{independent} of the size of the network and we obtain the scale-free ISS for all truncations of $\Sigma$. We further illustrate this aspect via numerical simulation below.

As we assumed that $M=1$, the dynamics of the modes of the truncated systems are the same as the dynamics of the corresponding modes of the infinite network, up to the fact that some states of the infinite-dimensional system become the inputs for the truncated system.
This easily implies that the functions $W_i$, which are ISS Lyapunov functions for subsystems of $\Sigma$ are also ISS Lyapunov functions for subsystems of the truncated system $\Sigma^{\langle n\rangle}$.

For $M>1$ the situation is more complex.
Firstly, if $M>1$, then some subsystems may be unstable, and thus clearly there exist truncations of the infinite network, which are not ISS. Secondly, the dynamics of the modes of $(f^{\langle n\rangle})^{M}$ are different from the dynamics of $(f^{M})^{\langle n\rangle}$.
However, for $M>1$ we have the following scale-free ISS stabilization
result. 
\begin{theorem}
\label{thm:Scale-free-stabilizability-of-truncations} 
Let $M>1$ and suppose that, with this $M$,
  Assumption~\ref{ass_vi_existence} and
  Assumption~\ref{ass_external_gains} are satisfied.  Let $n\in\N$ be
  arbitrary. Then there are $\beta\in\KL$ and $\gamma\in\Kinf$, such that
  for all $\xi^{\langle n \rangle}_0 \in \R^N$ and $u^{\langle n
    \rangle}(\cdot) \in \UC^{\langle n \rangle}:=\ell_\infty(\N,\R^P)$
  there is a globally bounded signal $\tilde{x}(\cdot) := \tilde{x}(\cdot,
  \xi^{\langle n \rangle}, u^{\langle n \rangle}): \N_0:\to \R^M$, such
  that it holds for all $k \in \N_0$ that
\begin{multline}
\label{eq:ISS-inf-network}
  \hspace{-1mm}|x^{\langle n \rangle}(k;\xi^{\langle n \rangle}_0,\tilde{x}(\cdot),u^{\langle n
  \rangle}(\cdot))|_{\A^{\langle n \rangle}} \\ \leq \max\big\{ \beta( |\xi^{\langle n \rangle}_0|_{\A^{\langle n \rangle}},k), \gamma(\|u^{\langle n \rangle}\|_{\infty}) \big\}.
\end{multline}
\end{theorem}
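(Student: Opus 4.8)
The plan is to \emph{stabilize} each truncation $\Sigma^{\langle n\rangle}$ by feeding it, through the external signal $\tilde x(\cdot)$, precisely the boundary states that the full infinite network would generate; the truncated trajectory then becomes a restriction of an infinite-network trajectory, to which the finite-step Lyapunov function of Theorem~\ref{MT} applies. Concretely, given $\xi^{\langle n\rangle}_0\in\R^N$ and $u^{\langle n\rangle}(\cdot)\in\UC^{\langle n\rangle}$, I would first lift the data to the whole network: set $\xi=(\xi_i)_{i\in\N}$ with $\xi_i:=(\xi^{\langle n\rangle}_0)_i$ for $i\le n$ and $\xi_i:=z_i$ for some arbitrary $z_i\in\A_i$ when $i>n$ (each $\A_i$ is nonempty), and set $u\in\UC$ by zero-padding. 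With this choice $W_i(\xi_i)=0$ for $i>n$, so $V(\xi)=\sup_{i\in\N}W_i(\xi_i)=\max_{1\le i\le n}W_i(\xi_i)\le\ol\omega(|\xi^{\langle n\rangle}_0|_{\A^{\langle n\rangle}})$ is finite and $\|u\|_\infty=\|u^{\langle n\rangle}\|_\infty$. Solving the network in $X_E$ (always well-defined, Section~\ref{sec:System description}) I would then define $\tilde x(k):=(x_j(k,\xi,u))_{j\in I^{\langle n\rangle}}$.

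The key observation is that, fed with this $\tilde x(\cdot)$, the truncated solution coincides with the first $n$ modes of the infinite solution, $x^{\langle n\rangle}(k;\xi^{\langle n\rangle}_0,\tilde x,u^{\langle n\rangle})=(x_i(k,\xi,u))_{1\le i\le n}$ for all $k$. This is a one-line induction on $k$: at each step the $i$th truncated equation ($i\le n$) is $f_i$ evaluated at the truncated state together with the entries of $\tilde x(k)$, which by construction reassembles the neighbor vector $\ol x_i$ of the infinite network and hence returns $x_i(k+1,\xi,u)$. Consequently
\[
|x^{\langle n\rangle}(k)|_{\A^{\langle n\rangle}}=\max_{1\le i\le n}|x_i(k,\xi,u)|_{\A_i}\le\ul\omega^{-1}\Big(\max_{1\le i\le n}W_i(x_i(k,\xi,u))\Big),
\]
so it suffices to bound the finite max $\max_{1\le i\le n}W_i(x_i(k))\le V(x(k,\xi,u))$.

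For the sampling times $k=Mq$ I would iterate the overall decay from Theorem~\ref{MT}, $V(x(M,\zeta,v))\le\max\{\alpha(V(\zeta)),\ol\gamma_u(\|v\|_\infty)\}$ --- whose derivation only manipulates the scalars $W_j(\zeta_j)$ through \eqref{eq_nablaviest}, \eqref{eq_uniformity_gamma}, \eqref{eq_uniformity_gammaiu} and therefore remains valid for every $\zeta\in X_E$ with $V(\zeta)<\infty$. Since $\alpha<\id$ this gives $V(x(Mq,\xi,u))\le\max\{\alpha^q(V(\xi)),\ol\gamma_u(\|u^{\langle n\rangle}\|_\infty)\}$, i.e. the claimed estimate at $k=Mq$ with an $n$-independent decay rate $\alpha^q$. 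It remains to interpolate over $k=Mq+r$, $0<r<M$. Here I would use the local finiteness of Assumption~\ref{ass:Properties-of-Sigma_i}: for $i\le n$ the value $x_i(Mq+r,\xi,u)$ depends only on the finitely many coordinates $x_j(Mq,\xi,u)$, $j\in I_i(M)$, each controlled by $|x_j(Mq)|_{\A_j}\le\ul\omega^{-1}(V(x(Mq)))$, plus $M$ input values. As $\bigcup_{i\le n}I_i(M)$ is finite, continuity of the $f_i$ yields a single $\hat\kappa\in\Kinf$ with $\max_{1\le i\le n}W_i(x_i(Mq+r))\le\hat\kappa(\max\{V(x(Mq)),\|u^{\langle n\rangle}\|_\infty\})$ uniformly in $q$ and $r$; writing $q=\lfloor k/M\rfloor$ and combining with the sampled decay produces the advertised bound $\max\{\beta(|\xi^{\langle n\rangle}_0|_{\A^{\langle n\rangle}},k),\gamma(\|u^{\langle n\rangle}\|_\infty)\}$ (majorizing the input term by a $\Kinf$ function if needed). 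The same continuity bound applied to the finitely many indices $j\in I^{\langle n\rangle}$ shows $\tilde x(\cdot)$ is globally bounded.

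The main obstacle is exactly this interpolation. For $M>1$ the infinite network $\Sigma$ need not be well-posed on $\ell^\infty$, so one cannot simply assert ISS of $\Sigma$ for \emph{all} $k$ and read off the truncation estimate; the overall Lyapunov function is only guaranteed finite and decaying along the subsequence $k=Mq$. What rescues the argument for the \emph{finite} truncation is that only the finitely many modes in $\bigcup_{i\le n}I_i(M)$ ever influence $x^{\langle n\rangle}$ over a window of length $M$, which keeps the intermediate values finite and uniformly controllable. The price is that the interpolating gain $\hat\kappa$ --- and hence $\beta,\gamma$ --- may depend on $n$, whereas the genuinely scale-free quantity, the decay rate $\alpha^q$, does not.
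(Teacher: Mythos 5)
Your construction is exactly the one the paper uses: lift the truncated data to the full network by padding the initial condition with points $z_i\in\A_i$ (so that $|\xi|_{\A}=|\xi^{\langle n\rangle}_0|_{\A^{\langle n\rangle}}$) and zero-padding the input, define $\tilde x(k):=(x_j(k,\xi,u))_{j\in I^{\langle n\rangle}}$ from the infinite-network solution, and observe that the truncated trajectory then coincides with the first $n$ components of $x(\cdot,\xi,u)$. Up to that point you and the paper agree. The divergence is in how the estimate \eqref{eq:ISS-inf-network} is obtained, and there your argument has a genuine gap. The paper simply invokes Theorem~\ref{MT}: $\Sigma$ is ISS with respect to $\A$, with $\beta,\gamma$ coming from the network-level Lyapunov function and hence \emph{independent of $n$}; both \eqref{eq:ISS-inf-network} and the boundedness of $\tilde x$ are then read off from a single ISS estimate. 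The interpolation between multiples of $M$ that worries you is handled once and for all inside Theorem~\ref{MT} through Proposition~\ref{prop:ISS-criterion}, using the $\K$-boundedness of $f$ that comes with the (implicitly carried) hypothesis that $\Sigma$ is well-posed --- the hypothesis under which Theorem~\ref{MT} is stated.

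Your attempt to avoid well-posedness fails on two counts. First, to iterate the $M$-step decay you apply \eqref{eq_nablaviest} at the points $x(Mq,\xi,u)\in X_E$, claiming the derivation ``remains valid for every $\zeta\in X_E$ with $V(\zeta)<\infty$.'' But \eqref{eq_nablaviest} is a \emph{hypothesis} stated only for $\xi\in X$, and finiteness of $V(\zeta)=\sup_i W_i(\zeta_i)$ does not imply $\zeta\in X$: the $W_i$ only control the distances $|\zeta_i|_{\A_i}$, which can all be small while $\sup_i|\zeta_i|=\infty$ (take, e.g., $\A_i$ unbounded, or translated sets as in Example~\ref{examp:Example-well-posedness}). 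So without well-posedness the iteration step is unjustified --- you need the very assumption you set out to avoid, or a strengthening of Assumption~\ref{ass_vi_existence} to $X_E$ that the paper does not make. Second, even granting the sampled decay, your interpolation via local finiteness produces a gain $\hat\kappa$ --- and hence $\beta,\gamma$ --- depending on $n$, as you concede. While the literal quantifier order of the theorem tolerates this, the sentence immediately following the theorem asserts that $\beta$ and $\gamma$ do \emph{not} depend on $n$, and this scale-freeness is the entire point of the result; your proof therefore establishes a strictly weaker statement. To your credit, you correctly noticed that the theorem as printed omits the well-posedness hypothesis on which the paper's appeal to Theorem~\ref{MT} rests; but the correct repair is to carry that hypothesis (restoring ISS of $\Sigma$ at \emph{all} times $k$ and $n$-free gains), not to weaken the conclusion.
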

In the previous theorem, $\beta$ and $\gamma$ do not depend on $n$, which shows a  \q{scale-free} ISS stabilizability property 
for all truncations of $\Sigma$.
\begin{proof}
Suppose that Assumptions~\ref{ass_vi_existence} and~\ref{ass_external_gains} are satisfied.
By Theorem~\ref{MT}, the system $\Sigma$ is ISS with respect to $\A$.
In particular, there exist $\beta$ and $\gamma$ independent of $n$ such that for any $\xi = (\xi^{\langle n \rangle}_0,(\tilde{\xi}_j)_{j\geq n+1}) \in X$ with 
$(\tilde{\xi}_j)_{j\geq n+1} \in \prod_{i\geq n+1}\A_i$, and all $u:=(u^{\langle n \rangle},0)\in\UC$ the corresponding solution of the system $\Sigma$ satisfies
\begin{equation}
\label{eq:ISS-inf-network-Sigma-estimate}
  \hspace{-1mm}|x(k,\xi,u)|_\A \leq \max\big\{ \beta( |\xi^{\langle n \rangle}_0|_{\A^{\langle n \rangle}},k), \gamma(\|u^{\langle n \rangle}\|_{\infty}) \big\} \,, k \in \N_0 .%
\end{equation}
This holds as $|\xi^{\langle n \rangle}_0|_{\A^{\langle n \rangle}} = |\xi|_{\A}$ and $\|u^{\langle n \rangle}\|_{\infty}=\|u\|_{\infty}$.


For $\xi^{\langle n \rangle} \in \R^N$ and $u^{\langle n \rangle} \in \UC^{\langle n \rangle}:=\ell_\infty(\N,\R^P)$ 
define $\tilde{x}(k, \xi^{\langle n \rangle}, u^{\langle n
  \rangle}):=(x_i(k,\xi,u))_{i\in I^{\langle n \rangle}}$, where the $x_i$
are the components of the solution of $\Sigma$ corresponding to the
initial condition $\xi$ and the input $u$.
By \eqref{eq:ISS-inf-network-Sigma-estimate}, we have for all $k \in \N_0$ that 
\begin{equation}
\label{eq:Boundedness of controls}
  \hspace{-1mm}|\tilde{x}(k)|_{\prod_{i\in I^{\langle n \rangle}}\A_i} \leq \max\big\{ \beta( |\xi^{\langle n \rangle}|_{\A^{\langle n \rangle}},k), \gamma(\|u^{\langle n \rangle}\|_{\infty}) \big\}.
\end{equation}
This shows that $\tilde{x}(\cdot)$ is bounded. In addition,
\eqref{eq:ISS-inf-network} follows from \eqref{eq:ISS-inf-network-Sigma-estimate}, as $x^{\langle n \rangle}(\cdot;\xi^{\langle n \rangle}_0,\tilde{x}(\cdot),u^{\langle n  \rangle}(\cdot))$ just consists of the first $n$ components of the 
trajectory $x(\cdot;\xi,u)$ of $\Sigma$.
\end{proof}

\begin{remark}
    Theorem~\ref{thm:truncation} only considers the case $M=1$ in
    Assumption~\ref{ass_vi_existence}, i.e. classic small-gain conditions.
    For larger $M$ conditions of the interplay of the finite truncation
    with the remainder of the network are required, as this remainder may
    have a stabilizing effect on the finite network. 
\end{remark}

\section{Illustrative Example}


In this section, we verify the effectiveness of our small-gain theorem by application to the control of traffic networks.

We revisit an example of a traffic network composed of infinitely many
cells, indexed by $i\in\N$, which was considered in~\cite{Kibangou}, \cite{KMS19}.
Each cell $i$ represents a continuous-time system $\Sigma_i$ described by
\begin{align}\label{subsys}
  \Sigma_i : \dot x_i = -\Bigl(\frac{v_i}{l_i} + e_i\Bigr)x_i + D_i\ol{x}_i + B_iu_i,%
\end{align}
with $x_i,u_i\in \R$ and the following structure%
\begin{enumerate}
\item[$-$] $e_i=0,D_i=c\frac{v_{i+1}}{l_{i+1}},\ol{x}_i=x_{i+1},B_i=0$ if $i\in S_1:=\{1\}$;
\item [$-$]$e_i=0,D_i=c\frac{v_{{i+4}}}{l_{i+4}},\ol{x}_i=x_{i+4},B_i=r>0$ if $i\in S_2:=\{4+8j : j\in\N\cup\{0\}\}$;
\item [$-$]$e_i=0,D_i=c\frac{v_{{i-4}}}{l_{i-4}},\ol{x}_i=x_{i-4},B_i=\frac{r}{2}$ if $i\in S_3:=\{5+8j :j\in\N\cup\{0\}\}$;
\item [$-$]$e_i=0,D_i=c(\frac{v_{i-1}}{l_{i-1}},\frac{v_{i+4}}{l_{i+4}})\trn,\ol{x}_i=(x_{i-1},x_{i+4}),B_i=0$ if $i\in S_4:=\{6+8j : j\in\N\cup\{0\}\}$;
\item [$-$] $e_i=e\in(0,1),D_i=c(\frac{v_{i-4}}{l_{i-4}},\frac{v_{i+1}}{l_{i+1}})\trn,\ol{x}_i=(x_{i-4},x_{i+1}),B_i=0$ if $i\in S_5:=\{9+8j : j\in\N\cup\{0\}\}$;
\item [$-$] $e_i=0,D_i=c(\frac{v_{i+1}}{l_{i+1}},\frac{v_{i+4}}{l_{i+4}})\trn,\ol{x}_i=(x_{i+1},x_{i+4}),B_i=0$ if $i\in S_6:=\{2+8j : j\in\N\cup\{0\}\}$;
\item [$-$] $e_i=0,D_i=c(\frac{v_{i-4}}{l_{i-4}},\frac{v_{i-1}}{l_{i-1}})\trn,\ol{x}_i=(x_{i-4},x_{i-1}),B_i=0$ if $i\in S_7:=\{7+8j : j\in\N\cup\{0\}\}$;
\item [$-$] $e_i=2e,D_i=c(\frac{v_{i-1}}{l_{i-1}},\frac{v_{i+4}}{l_{i+4}})\trn,\ol{x}_i=(x_{i-1},x_{i+4}),B_i=0$ if $i\in S_8:=\{8+8j :j\in\N\cup\{0\}\}$;
\item [$-$] $e_i=0,D_i=c(\frac{v_{i-4}}{l_{i-4}},\frac{v_{i+1}}{l_{i+1}})\trn,\ol{x}_i=(x_{i-4},x_{i+1}),B_i=0$ if $i\in S_9:=\{11+8j : j\in\N\cup\{0\}\}$;
\item[$-$] $e_i=0,D_i=c\frac{v_{i+1}}{l_{i+1}},\ol{x}_i=x_{i+1},B_i=r/2$ if $i\in S_{10} :=\{3\}$;
\end{enumerate}
where, for all $i\in\N$, $0< \underaccent{\bar}{v} \leq v_i\leq \ol{v}$, $0<\ul{l}\leq l_i\leq \ol{l}$, and $c\in(0,0.5)$.
In~\eqref{subsys}, $l_i$ is the length of a cell in kilometers (km), and $v_i$ is the flow speed of the vehicles in kilometers per hour (km/h).
The state $x_i$ is the density of traffic, given in vehicles per cell, for each cell $i$ of the road.
The scalars $B_i$ represent the number of vehicles that can enter the cells through entries which are controlled by $u_i$, with $u_i=1$ and $u_i=0$ correspond to green and red light, respectively.
The percentage of vehicles leaving the cells using available exits is denoted by $e_i$.
Furthermore, $c$, which is a design parameter, reflects the percentage of vehicles entering cell $i$ from the neighboring cells.
Such a traffic network schematically is illustrated by Figure~\ref{allr}.
Discretizing system~\eqref{subsys} over time, each cell $i$ in discrete-time is described by
\begin{align}\label{subsys-d}
  \Sigma_i^d: x_i^+ = \Big(1-T\big(\frac{v_i}{l_i} + e_i\big)\Big) x_i + T D_i\ol{x}_i + T B_i u_i,
\end{align}
where $T > 0$ is the sampling period.

\begin{figure}
	\vspace*{-0.0cm}
	\begin{center}
			\includegraphics[height=5.5cm]{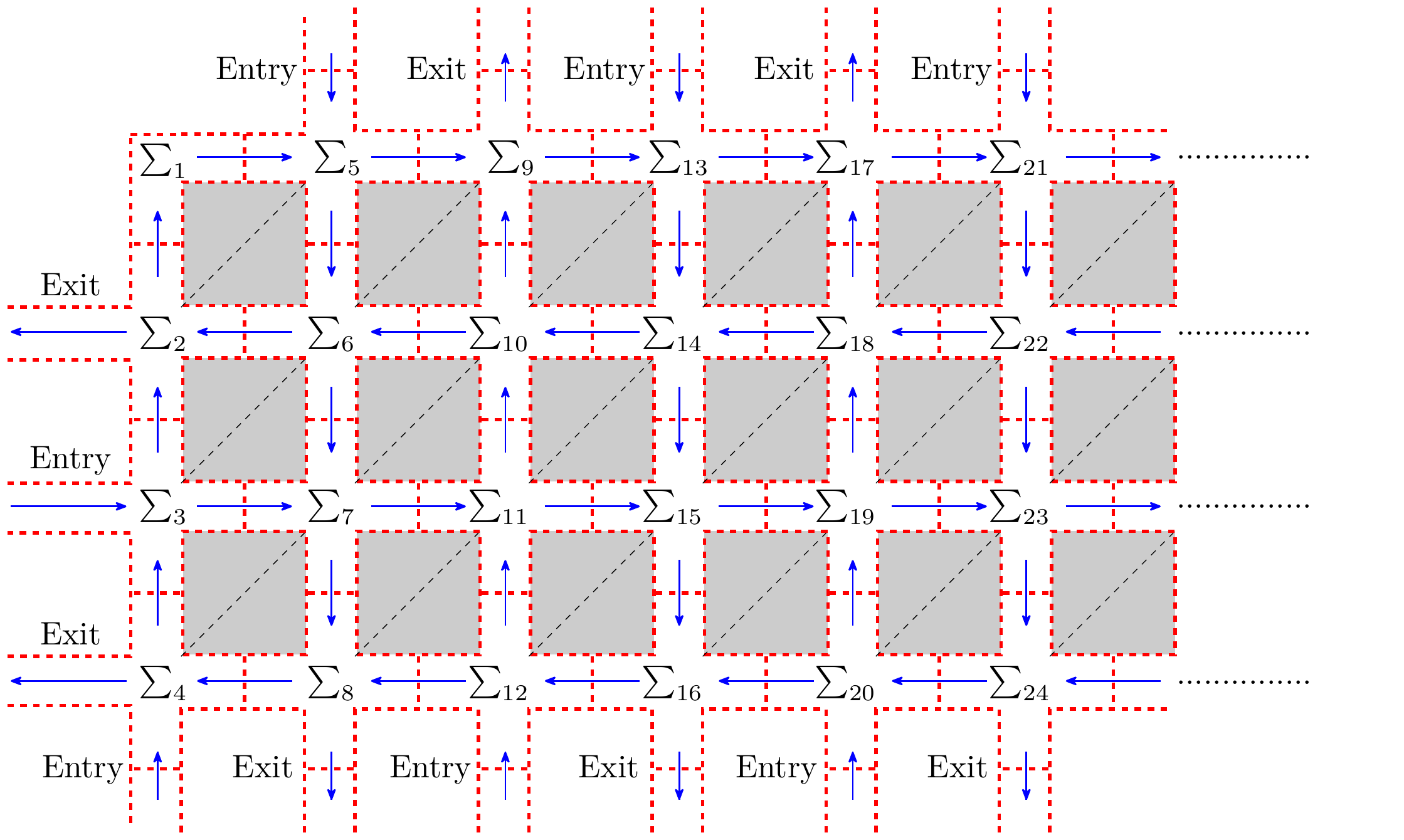}
				\vspace*{-0.6cm}
		\caption{Model of a road traffic network composed of infinitely many subsystems, taken from~\cite{KMS19}.}
		\label{allr}
	\end{center}
	\vspace*{-0.2cm}
\end{figure}

To verify eISS of the network, for each subsystem $\Sigma_i^d$ we take an eISS Lyapunov function of the form $V_i(x_i) = \abs{x_i}$.
The function $V_i$ clearly satisfies~\eqref{eq_viest} and ~\eqref{eq_uniformity_alpha} for all $i\in\N$ with $\ul\omega = \ul{\omega}_i = \ol{\omega}_i = \ol\omega = 1$.
We also have
\begin{align*}
  V_i(x_i^+) & \leq\! \Big(1-T\big(\frac{v_i}{l_i} + e_i\big)\Big) \abs{x_i} \!+\! T c \norm{D_i} \abs{\ol x_{i}}_\infty \!+\! T {B_i} \abs{u_i} \nonumber\\
 & \leq  \max\big\{ \gamma\abs{x_i} , \gamma\abs{\ol x_{i}}_\infty , \frac{1}{\varepsilon} T {B_i} \abs{u_i}\big\} \nonumber\\
  &\leq  \max\big\{ \gamma V_i(x_i) , \gamma V_i(x_{i-1}) , \gamma V_i(x_{i+1}) , \frac{1}{\ep}T {B_i} \abs{u_i} \big\}
\end{align*}
where $\gamma := \Big(1-T\big(\frac{v_i}{l_i} + e_i\big)\Big) + T c \norm{D_i} + \ep$, $\ep > 0$.
This implies that~\eqref{eq_nablaviest} is satisfied with $M=1$, $\gamma_{ij} = \gamma$
for all $j\in \{i-1,i,i+1\}$, $\gamma_{ij} = 0 $ for all $j\in \N \backslash \{i-1,i,i+1\}$, $\gamma_{iu} = T B_i / \ep$.
Additionally one can observe that condition~\eqref{eq_uniformity_gammaiu} is fulfilled with $\ol\gamma_{u} = T r / \ep$.
Finally, condition~\eqref{eq_uniformity_gamma} holds as one can \emph{always}	take $T,c$ and $\ep$ sufficiently small such that
$$\alpha = 1-T\big(\frac{\underaccent{\bar}{v}}{\ol l} \big) + T c\big(\frac{\ol v}{\ul l}\big) + \ep < 1 .$$
We note that all gain functions are linear. 
This together with the previous observations admits the use of Theorem~\ref{MT} to conclude eISS of the network composed of subsystems~\eqref{subsys-d}.

Now by Theorem~\ref{thm:truncation} one see that the performance indices, i.e. the decay rate $\alpha$ and the input gain $\ol\gamma_u$, are preserved for any finite interconnection of $\Sigma^d$.
This is illustrated by Figures~\ref{fig:500} to~\ref{fig:2000}, where we, respectively, consider a network of $10^2$, $10^3$ and $10^4$ cells.
Over the simulation period we take $u_i = 1$ (i.e. let all traffic lights at the entries be green), the sampling period $T =
20$ ms.
Moreover, the initial values are uniformly distributed over $[0,20]$.
From Figures~\ref{fig:500} and~\ref{fig:2000}, the
overall behavior of the network remains almost identical, though the network grows 10 times in size in each case.
This shows the independence of the stability/performances indices from the network size.

\begin{figure}
\vspace*{-0.0cm}
\hspace*{-0.6cm}
\centering
\includegraphics[scale=0.24]{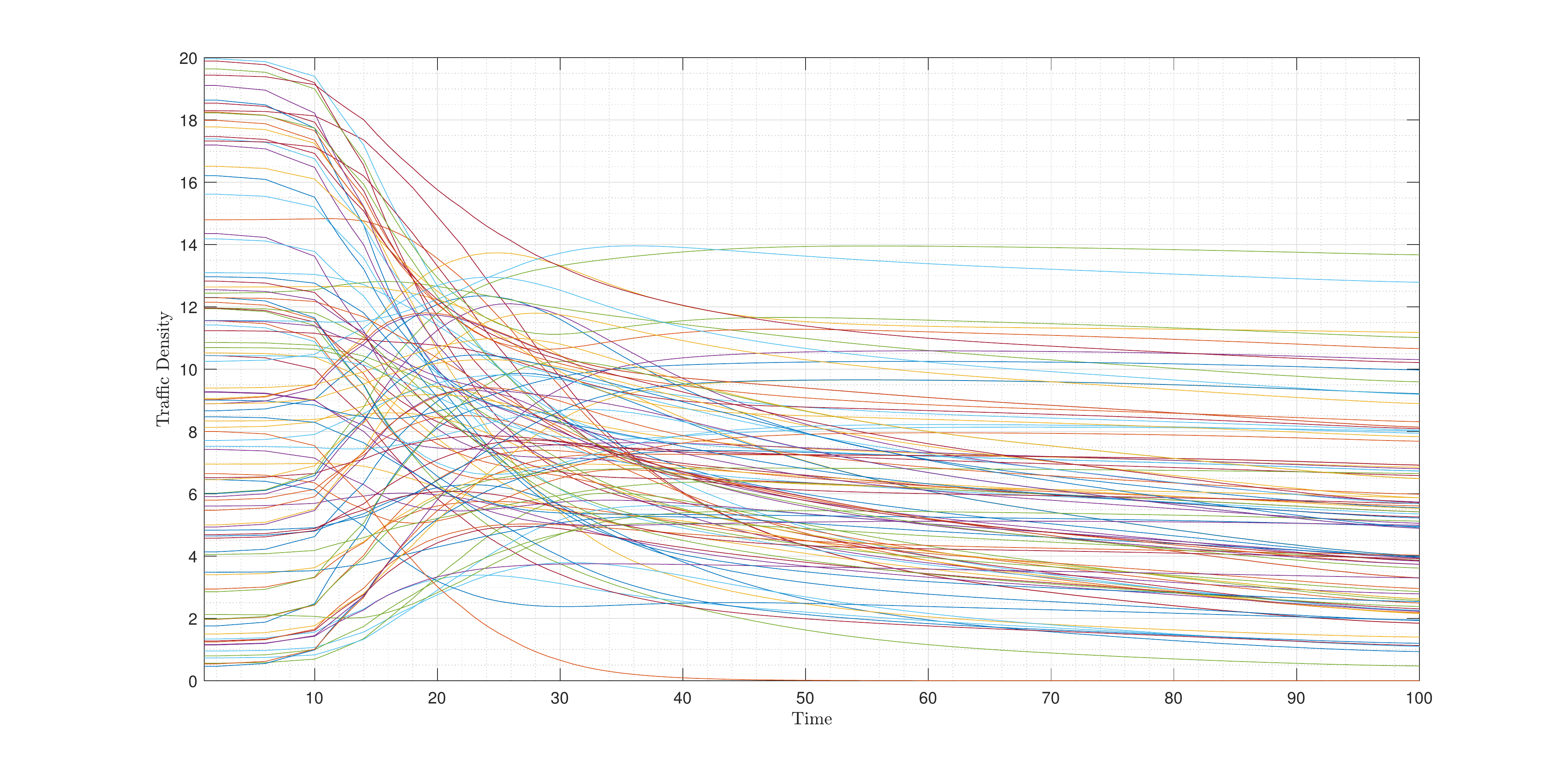}
\vspace*{-0.8cm}
\caption{State trajectories $x_i$ for a network of $10^2$ cells.}
\label{fig:500}
\end{figure}

\begin{figure}
\vspace*{-0.0cm}
\hspace*{-0.6cm}
\centering
\includegraphics[scale=0.24]{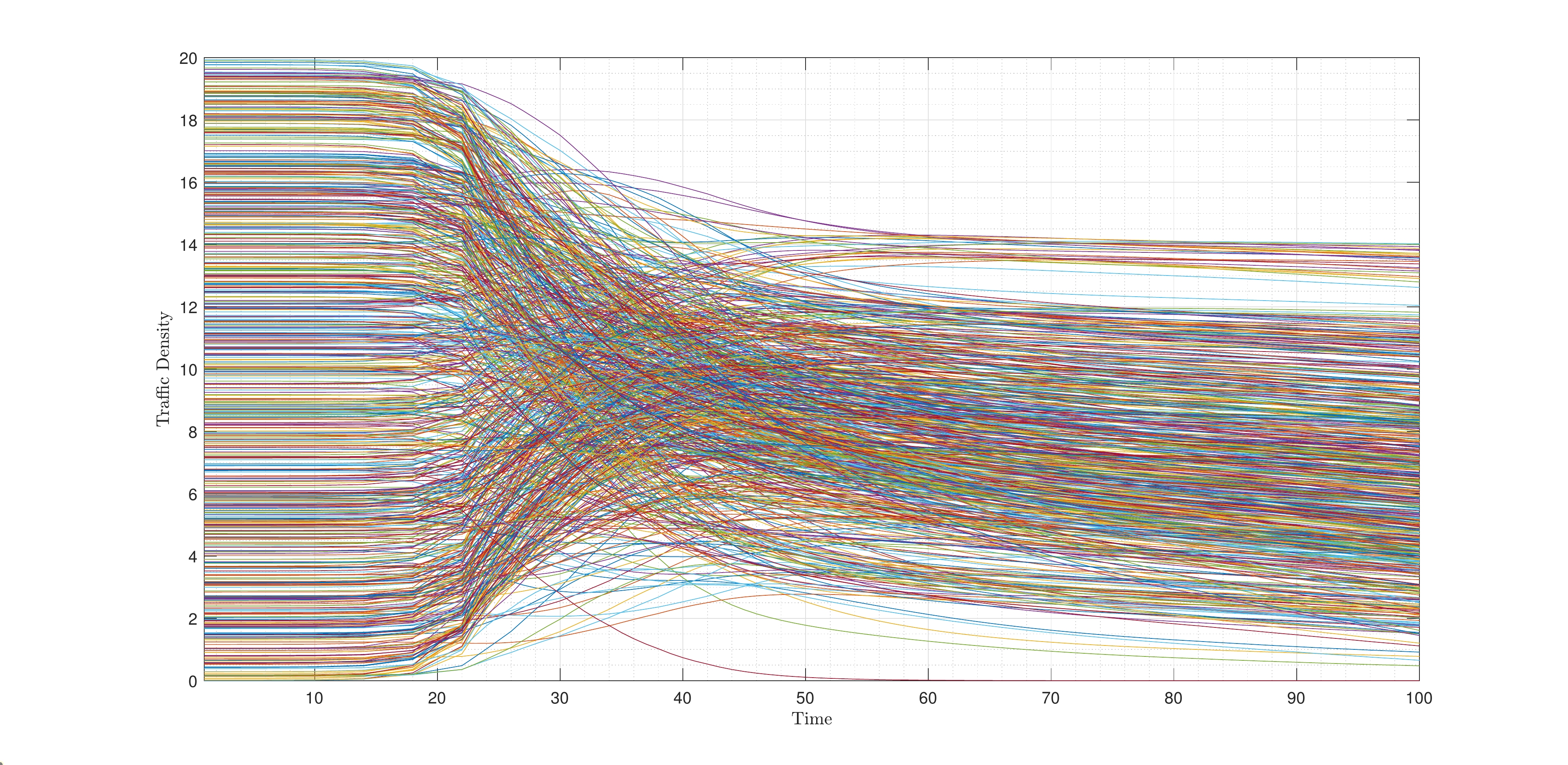}
\vspace*{-0.8cm}
\caption{State trajectories $x_i$ for a network of $10^3$ cells.}
\label{fig:1000}
\end{figure}

\begin{figure}
\vspace*{-0.0cm}
\hspace*{-0.6cm}
\centering
\includegraphics[scale=0.24]{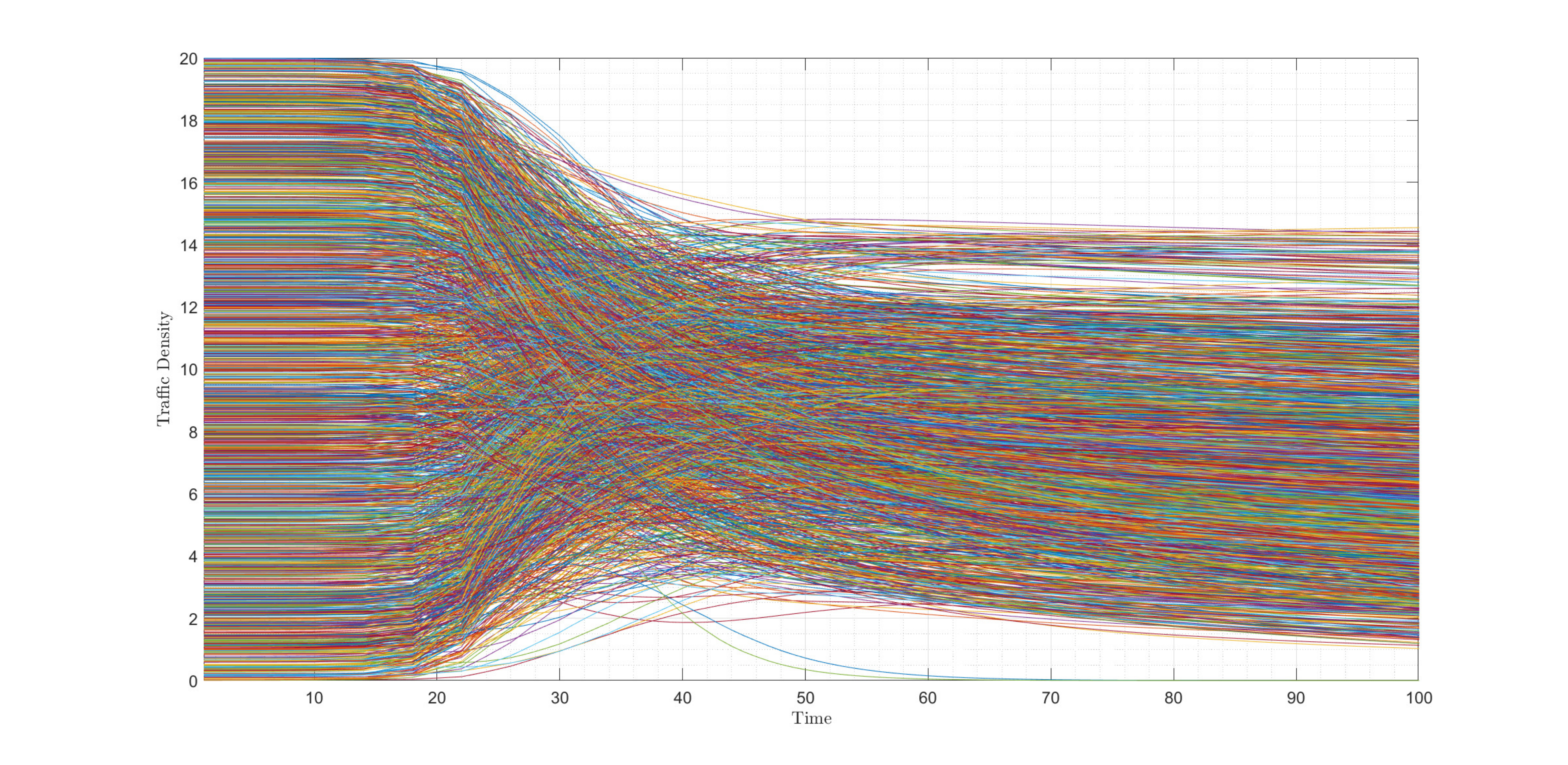}
\vspace*{-0.8cm}
\caption{State trajectories $x_i$ for a network of $10^4$ cells.}
\label{fig:2000}
\end{figure}


\section{Conclusions and outlook} \label{sec:conclusions}

In this paper we have discussed ISS Lyapunov small-gain theorems for discrete time systems given as infinite networks with a locally finite neighborhood structure, where individual subsystems do not have to be ISS.
The necessity of the small-gain condition in case of exponential decay rate of solutions have been established.
It was shown how to use this approach for the over-approximation of large-but-finite networks.
In particular, it has been shown that the ISS property of the infinite network is transferable to any truncation (in size) of the network.

The following challenging problems remain for a future work: in this
work we assumed that all the coupling gains are less than the
identity. However, in view of~\cite{DMS19a,Dashkovskiy.2010,DRW07} this
assumption is not needed for finite networks, where a max-type small-gain formulation is given in the form of the so-called cyclic small-gain condition. Another direction is to investigate the relationship between ISS of an infinite network and that of its truncation with time-varying size.
In view of~\cite{Noroozi.2020a}, we are also investigating application of the results of this paper to distributed control design for infinite networks.

\bibliographystyle{IEEEtran}

\begin{thebibliography}{10}
\providecommand{\url}[1]{#1}
\csname url@samestyle\endcsname
\providecommand{\newblock}{\relax}
\providecommand{\bibinfo}[2]{#2}
\providecommand{\BIBentrySTDinterwordspacing}{\spaceskip=0pt\relax}
\providecommand{\BIBentryALTinterwordstretchfactor}{4}
\providecommand{\BIBentryALTinterwordspacing}{\spaceskip=\fontdimen2\font plus
\BIBentryALTinterwordstretchfactor\fontdimen3\font minus
  \fontdimen4\font\relax}
\providecommand{\BIBforeignlanguage}[2]{{%
\expandafter\ifx\csname l@#1\endcsname\relax
\typeout{** WARNING: IEEEtran.bst: No hyphenation pattern has been}%
\typeout{** loaded for the language `#1'. Using the pattern for}%
\typeout{** the default language instead.}%
\else
\language=\csname l@#1\endcsname
\fi
#2}}
\providecommand{\BIBdecl}{\relax}
\BIBdecl

\bibitem{DMS19a}
S.~Dashkovskiy, A.~Mironchenko, J.~Schmid, and F.~Wirth, ``Stability of
  infinitely many interconnected systems,'' in \emph{11th IFAC Symp. Nonlinear
  Control Syst.}, 2019, pp. 937--942.

\bibitem{Sarkar.2018}
T.~Sarkar, M.~Roozbehani, and M.~A. Dahleh, \emph{Robustness Sensitivities in
  Large Networks}.\hskip 1em plus 0.5em minus 0.4em\relax Springer, 2018, pp.
  81--92.

\bibitem{BeJ17}
B.~Besselink and K.~H. Johansson, ``String stability and a delay-based spacing
  policy for vehicle platoons subject to disturbances,'' \emph{IEEE Trans.
  Autom. Control}, vol.~62, no.~9, pp. 4376--4391, 2017.

\bibitem{Jovanovic.2005b}
M.~R. Jovanovi\'{c} and B.~Bamieh, ``On the ill-posedness of certain vehicular
  platoon control problems,'' \emph{IEEE Trans. Autom. Control}, vol.~50,
  no.~9, pp. 1307--1321, 2005.

\bibitem{Bamieh.2012}
B.~{Bamieh}, M.~R. {Jovanovic}, P.~{Mitra}, and S.~{Patterson}, ``Coherence in
  large-scale networks: Dimension-dependent limitations of local feedback,''
  \emph{IEEE Trans. Autom. Control}, vol.~57, no.~9, pp. 2235--2249, 2012.

\bibitem{Barooah.2009}
P.~{Barooah}, P.~G. {Mehta}, and J.~P. {Hespanha}, ``Mistuning-based control
  design to improve closed-loop stability margin of vehicular platoons,''
  \emph{IEEE Trans. Autom. Control}, vol.~54, no.~9, pp. 2100--2113, 2009.

\bibitem{BPD02}
B.~Bamieh, F.~Paganini, and M.~A. Dahleh, ``Distributed control of spatially
  invariant systems,'' \emph{IEEE Trans. Autom. Control}, vol.~47, no.~7, pp.
  1091--1107, 2002.

\bibitem{Curtain.2010}
R.~Curtain, O.~Iftime, and H.~Zwart, ``A comparison between {LQR} control for a
  long string of {SISO} systems and {LQR} control of the infinite spatially
  invariant version,'' \emph{Automatica}, vol.~46, pp. 1604--1615, 2010.

\bibitem{Dashkovskiy.2010}
S.~N. Dashkovskiy, B.~S. R{\"u}ffer, and F.~R. Wirth, ``Small gain theorems for
  large scale systems and construction of {ISS} {L}yapunov functions,''
  \emph{SIAM J. Control Opt.}, vol.~48, no.~6, pp. 4089--4118, 2010.

\bibitem{DRW07}
S.~Dashkovskiy, B.~S. R\"{u}ffer, and F.~R. Wirth, ``{An ISS small gain theorem
  for general networks},'' \emph{Math. Control Signals Syst.}, vol.~19, no.~2,
  pp. 93--122, 2007.

\bibitem{JMW96}
Z.-P. Jiang, I.~M.~Y. Mareels, and Y.~Wang, ``{A Lyapunov formulation of the
  nonlinear small-gain theorem for interconnected ISS systems},''
  \emph{Automatica}, vol.~32, no.~8, pp. 1211--1215, 1996.

\bibitem{Geiselhart.2015}
R.~Geiselhart, M.~Lazar, and F.~R. Wirth, ``A relaxed small-gain theorem for
  interconnected discrete-time systems,'' \emph{IEEE Trans. Autom. Control},
  vol.~60, no.~3, pp. 812--817, 2015.

\bibitem{Gielen.2015}
R.~H. Gielen and M.~Lazar, ``On stability analysis methods for large-scale
  discrete-time systems,'' \emph{Automatica}, vol.~55, pp. 66--72, 2015.

\bibitem{Noroozi.2014}
N.~Noroozi and B.~S. R{\"u}ffer, ``Non-conservative dissipativity and
  small-gain theory for {ISS} networks,'' in \emph{53rd IEEE Conf. Decision
  Control}, 2014, pp. 3131--3136.

\bibitem{MiP20}
A.~Mironchenko and C.~Prieur, ``Input-to-state stability of
  infinite-dimensional systems: recent results and open questions,'' \emph{SIAM
  Review}, vol.~62, no.~3, pp. 529--614, 2020.

\bibitem{MKG20}
A.~Mironchenko, C.~Kawan, and J.~Gl\"uck, ``{Nonlinear small-gain theorems for
  input-to-state stability of infinite interconnections},'' \emph{Submitted to
  Math. Control Signals Syst.}, 2020.

\bibitem{DaP20}
S.~Dashkovskiy and S.~Pavlichkov, ``Stability conditions for infinite networks
  of nonlinear systems and their application for stabilization,''
  \emph{Automatica}, vol. 112, p. 108643, 2020.

\bibitem{KMS19}
C.~Kawan, A.~Mironchenko, A.~Swikir, N.~Noroozi, and M.~Zamani, ``A
  {L}yapunov-based {ISS} small-gain theorem for infinite networks,''
  \emph{Provisionally accepted to IEEE Trans. Autom. Control}, 2020, see also:
  http://arxiv.org/abs/1910.12746.

\bibitem{NMK21}
N.~Noroozi, A.~Mironchenko, C.~Kawan, and M.~Zamani, ``Set stability of
  infinite networks: {ISS} small-gain theory and its applications,'' in
  \emph{24th Int. Symp. Math. Theory Netw. Syst.}, 2021, to appear, see an
  extended version in http://arxiv.org/abs/2002.07085.

\bibitem{Geiselhart.2014c}
R.~Geiselhart, R.~H. Gielen, M.~Lazar, and F.~R. Wirth, ``An alternative
  converse {L}yapunov theorem for discrete-time systems,'' \emph{Syst. Control
  Lett.}, vol.~70, pp. 49--59, 2014.

\bibitem{Aeyels.1998}
D.~Aeyels and J.~Peuteman, ``A new asymptotic stability criterion for nonlinear
  time-variant differential equations,'' \emph{IEEE Trans. Autom. Control},
  vol.~43, no.~7, pp. 968--971, 1998.

\bibitem{Noroozi.2018a}
N.~Noroozi, R.~Geiselhart, L.~Gr{\"u}ne, B.~S. R{\"u}ffer, and F.~R. Wirth,
  ``Non-conservative discrete-time {ISS} small-gain conditions for closed
  sets,'' \emph{IEEE Trans. Autom. Control}, vol.~63, no.~5, pp. 1231--1242,
  2018.

\bibitem{NMW20}
N.~{Noroozi}, A.~{Mironchenko}, and F.~R. {Wirth}, ``A relaxed small-gain
  theorem for discrete-time infinite networks,'' in \emph{59th IEEE Conf.
  Decision Control}, 2020, to appear.

\bibitem{Kellett.2014}
C.~M. Kellett, ``A compendium of comparison function results,'' \emph{Math.
  Control Signals Syst.}, vol.~26, no.~3, pp. 339--374, 2014.

\bibitem{Hel06}
A.~Y. Helemskii, \emph{Lectures and exercises on functional analysis}.\hskip
  1em plus 0.5em minus 0.4em\relax American Mathematical Society Providence,
  RI, 2006, vol. 233.

\bibitem{MiW19b}
A.~Mironchenko and F.~Wirth, ``Existence of non-coercive {L}yapunov functions
  is equivalent to integral uniform global asymptotic stability,'' \emph{Math.
  Control Signals Syst.}, vol.~31, no.~4, 2019.

\bibitem{MiW18b}
------, ``Characterizations of input-to-state stability for
  infinite-dimensional systems,'' \emph{IEEE Trans. Autom. Control}, vol.~63,
  no.~6, pp. 1602--1617, 2018.

\bibitem{GeW16}
R.~Geiselhart and F.~R. Wirth, ``Relaxed {ISS} small-gain theorems for
  discrete-time systems,'' \emph{SIAM J. Control Opt.}, vol.~54, no.~2, pp.
  423--449, 2016.

\bibitem{Noroozi.2020a}
N.~Noroozi, R.~Geiselhart, L.~Gr{\"u}ne, and F.~R. Wirth, ``Control of
  discrete-time nonlinear systems via finite-step control {Lyapunov}
  functions,'' \emph{Syst. Control Lett.}, vol. 138, p. 104631, 2020.

\bibitem{Jiang.2001}
Z.-P. Jiang and Y.~Wang, ``Input-to-state stability for discrete-time nonlinear
  systems,'' \emph{Automatica}, vol.~37, no.~6, pp. 857--869, 2001.

\bibitem{Kibangou}
C.~C. de~Wit, L.~Ojeda, and A.~Kibangou, ``Graph constrained-{CTM} observer
  design for the {G}renoble south ring,'' in \emph{13th IFAC Symp. Control
  Transp. Syst.}, 2012, pp. 197--202.

\end{thebibliography}

\end{document}